\documentclass{article}

\usepackage{geometry}
\usepackage{amssymb}
\usepackage{amsthm}
\usepackage{amsmath}
\usepackage{algorithm}
\usepackage{algpseudocode}
\usepackage{xcolor}
\usepackage{graphicx,caption,subcaption}
\usepackage{multirow, multicol}
\usepackage{authblk}

\newtheorem*{remark}{Remark}

\newtheorem{prop}{Proposition}
\theoremstyle{definition} 
\newtheorem{definition}{Definition}[section] 
\newtheorem{corollary}{Corollary}

\newcommand{\Enl}[1]{E_{\mathrm{nl}}(#1)}
\newcommand{\mob}{\mathcal M}
\newcommand{\dis}{\mathcal D}
\newcommand{\Lop}{\mathcal L}

\newcommand{\eps}{\varepsilon}
\newcommand{\dt}{\Delta t}
\newcommand{\Esym}{E_{\mathrm{sym}}}

\title{Energy-rate equidistribution for adaptive SSAV--BDF2
  integration of gradient flows}
\author{Suki Shergill \thanks{Email: sks78@sfu.ca} \and JF Williams\thanks{Email: jfwillia@sfu.ca}}
\affil{Department of Mathematics, Simon Fraser University}
\date{\today}

\begin{document}

\maketitle

\begin{abstract}
We develop an adaptive time-stepping strategy for scalar auxiliary variable (SAV) schemes combined with variable-step BDF2 discretization for gradient flows. The strategy selects step sizes based on the instantaneous energy dissipation rate and enforces monotonicity of a computable symmetric energy through a step rejection mechanism.

We show that the step-size rules used in practice correspond to equidistributing a monitor function along the energy curve, connecting energy-adaptive time stepping to the classical theory of r-adaptive mesh methods. The monitor $g = |\dot{E}|$ is asymptotically optimal as it minimises the leading-order maximum symmetric-energy discrepancy for a given number of steps. A combined monitor that switches to $g = \sqrt{|\dot{E}|}$ near equilibrium caps step growth without degrading this error bound.  The arclength parameter $\beta$ used in prior work is shown to be the ratio of the largest permitted step to the energy-error tolerance.

The controller is paired with the SSAV--BDF2 discretization. Accepted steps satisfy monotonicity of the symmetric energy, from which we deduce a bound on the physical energy that is uniform in the number of steps, $\sup_n E(u^n) \le E(u^{0}) + \tau_1 + O(\Delta t_{\max}^2)$, and the scheme achieves second-order global accuracy under standard step-ratio restrictions. Numerical experiments on Cahn--Hilliard, Ohta--Kawasaki, Phase--Field Crystal, and functionalized Cahn--Hilliard models confirm the theory and demonstrate the method's efficiency.

\end{abstract}

\section{Introduction}

Gradient flows of free-energy functionals model many pattern-forming
systems in materials science, fluid mechanics, and biology.
Prototypical examples include the Cahn--Hilliard (CH) equation for phase
separation in binary alloys \cite{CahnHilliard}, the Ohta--Kawasaki (OK)
model for block-copolymer morphology \cite{OhtaKawasaki1986}, the
Phase--Field Crystal (PFC) model describing crystalline order on
diffusive time scales \cite{Elder2002PFC,Elder2004PFC}, and the
Functionalized Cahn--Hilliard (FCH) equations for amphiphilic
interfaces \cite{Gompper1990FCH}.

All these models share a common mathematical structure. Let $H$ be a real Hilbert space with inner product
$\langle\cdot,\cdot\rangle$. We consider energies of the abstract form
\[
E(u)=\tfrac12\langle u,\mathcal L u\rangle+E_{\mathrm{nl}}(u),
\]
where $\mathcal L:D(\mathcal L)\subset H\to H$ is a densely defined
self-adjoint differential operator that is bounded below on the relevant
state space, and $E_{\mathrm{nl}}$ is Fr\'echet differentiable. We write
\[
G(u):=\frac{\delta E_{\mathrm{nl}}}{\delta u}.
\]
For each model the governing gradient flow takes the form
\[
u_t \;=\; \mob\,\frac{\delta E(u)}{\delta u},
\qquad
\frac{\delta E}{\delta u}=\mathcal L u+G(u),
\]
where $\mob$ is a self-adjoint non-positive mobility operator.
In the sequel we take $\mob=\Delta$ with periodic (or homogeneous
Neumann) boundary conditions, so that
\[
u_t = \Delta(\mathcal{L}u+G(u)).
\]
$E_{\mathrm{nl}}(u)=\int_\Omega F(u)\,dx$, one has $G(u)=F'(u)$.
This structure guarantees \emph{energy dissipation}.
Defining the chemical potential
$\mu=\delta E/\delta u=\mathcal{L}u+G(u)$, we have
\[
\frac{dE}{dt}
=\big\langle \mu,u_t\big\rangle
=\big\langle \mu,\Delta\mu\big\rangle
=-\,\|\nabla\mu\|_{L^2}^2\le0,
\]
which expresses the physical irreversibility of relaxation toward
critical points of the energy.
The same structure enforces a mass constraint,
$\frac{d}{dt}\int_\Omega u(x,t)\,dx = 0$.

\paragraph{Energy structure and numerical challenges.}
Preserving this gradient-flow structure at the discrete level is
critical.
If a scheme fails to maintain discrete energy decay, numerical solutions
can exhibit spurious oscillations, drift in total energy, or violate
qualitative features such as coarsening rates and phase-volume
conservation \cite{WiseWangLowengrub2009}.
These equations are also stiff: the linear diffusive part imposes severe
time-step restrictions \cite{hairer1996solving}, while the nonlinear
term $G(u)$ can vary rapidly as interfaces sharpen.
Fully explicit schemes are therefore inefficient, and implicit methods
require careful design to remain energy-stable and second-order accurate.

\paragraph{SAV and BDF2 frameworks.}
The scalar auxiliary variable (SAV) method of Shen, Xu, and Yang
\cite{shen2018scalar,shen2019new} addresses this by augmenting the
system with a scalar $w(t)=\sqrt{\Enl{u(t)}+C}$, producing a modified
energy that is quadratic in $(u,w)$ and admits stable, linearly implicit
time discretizations.

SAV--BDF2 and stabilized SAV--BDF2 (SSAV--BDF2) methods achieve second-order accuracy and
strong energy-dissipation properties without solving nonlinear systems at
each step \cite{shen2018scalar}.
For problems whose energy spans several time scales---coarsening,
nucleation, defect dynamics---adaptive time stepping is needed to make
these schemes efficient.

\paragraph{Motivation for adaptive, structure-preserving methods.}
Energy decay provides a physically meaningful measure of local temporal
activity.
Where evolution is slow (small dissipation rate), one can take large
steps; where it is fast (interface reconnection, defect annihilation),
finer resolution is needed.
An adaptive controller driven by the instantaneous energy dissipation
rate,
\[
\dis(t)=\int_\Omega |\nabla\mu|^2\,dx,
\]
adjusts the step size directly from the physical relaxation dynamics.

The difficulty is that variable-step BDF2 loses the telescoping energy
identity that makes constant-step analysis work, and the SAV/SSAV
coupling introduces additional consistency errors
\cite{hairer1987solving}.

\paragraph{Contributions and organization.}
We develop an energy-rate equidistribution framework for adaptive
SSAV--BDF2 integration of gradient-flow energies
\[
E(u)=\tfrac12\big\langle u,\mathcal{L}u\big\rangle
+E_{\mathrm{nl}}(u)
\]
and the $H^{-1}$ mass-preserving gradient flow
\begin{equation}\label{eq:gradient_flow}
u_t = \Delta \left(\mathcal{L} u + G(u)\right) \; .
\end{equation}

The paper makes four contributions. First, in Section~\ref{sec:equidistribution}, we show that step-size selection based on the energy dissipation rate corresponds to equidistributing a monitor function along the energy curve, and we prove that the monitor $g = |\dot{E}|$ asymptotically minimises the leading-order maximum symmetric-energy discrepancy for a fixed number of steps (Proposition~\ref{prop:equidist-optimal}).  We also give the arclength parameter $\beta$ from \cite{Qiao2011,Zhang2012} a careful interpretation in terms of the energy-error tolerance and the maximum admissible step (Proposition~\ref{prop:arclength}). This part of the analysis is independent of the time integrator and of the SAV reformulation: it applies to any one-step or multistep discretization of any gradient flow, and we introduce it with SSAV--vBDF2 only in Section~\ref{sec:algorithm}.

Second, we propose a combined monitor (Section~\ref{sec:equidistribution}) that uses $g = |\dot{E}|$ during rapid dissipation and $g = \sqrt{|\dot{E}|}$ near equilibrium, capping step growth where the first rule becomes too loose, and we show that the combined rule bounds the symmetric-energy discrepancy by the user defined parameter $\tau_1/2$ in \emph{both} regimes (Corollary~\ref{cor:uniform-local}).

Third, we pair this controller with SSAV--vBDF2 discretization and a symmetric-energy acceptance criterion (Section~\ref{sec:algorithm}). Accepted steps satisfy $\Esym^{n+1} \le \Esym^n$ by construction, from which we deduce a bound on the physical energy that is uniform in $n$ (Proposition~\ref{prop:uniform-physical}) together with a per-step bound whose deviation from monotonicity is proportional to $|\rho_n - 1|$ (Proposition~\ref{prop:physical-quasi-mono}). The scheme is second-order accurate under standard step-ratio restrictions (Proposition~\ref{prop:global-accuracy}).

Fourth, in Section~\ref{sec:examples}, we test the method on Cahn--Hilliard, Ohta--Kawasaki, Phase--Field Crystal, and functionalized Cahn--Hilliard problems. Here we 
compare efficiency against fixed-step SSAV--BDF2 at matched accuracy.

\section{Problem Setup and Discretization}

\subsection{Continuous Problem}

Let $H$ be a real Hilbert space with inner product
$\langle\cdot,\cdot\rangle$. We consider equation
\eqref{eq:gradient_flow}, where
$\Lop:D(\Lop)\subset H\to H$ is a densely defined self-adjoint operator,
bounded below on the relevant state space, with periodic or homogeneous
Neumann boundary conditions where appropriate. We split the energy into
linear ($E_{\mathrm{l}}$) and nonlinear ($E_{\mathrm{nl}}$) parts
\[
E(u) = E_{\mathrm{l}}(u) + E_{\mathrm{nl}}(u),
\qquad
E_{\mathrm{l}}(u) = \tfrac12 \big\langle u,\Lop u\big\rangle.
\]
We assume $E_{\mathrm{nl}}$ is Fr\'echet differentiable and write
\[
G(u):=\frac{\delta E_{\mathrm{nl}}}{\delta u}.
\]
For the pointwise case
\[
E_{\mathrm{nl}}(u) = \int_\Omega F(u)\,dx,
\]
one has $G(u)=F'(u)$. In this pointwise case we assume that the
nonlinear term $F: \mathbb{R} \to \mathbb{R}$ satisfies:

\begin{itemize}
\item $F \in C^3$ with bounded derivatives: $\|F''\|_{L^\infty} \leq C_F$ and $\|F'''\|_{L^\infty} \leq C_F$.
\item $F$ is bounded below: there exists $C > 0$ such that $F(u) + C \geq c_0 > 0$ for all relevant $u$.
\end{itemize}

The double-well potential $F(u) = \tfrac14(u^2-1)^2$ satisfies these conditions on any bounded subset of $\mathbb{R}$, with $C = 1/4$. Since solutions of the Cahn--Hilliard equation satisfy a maximum principle at the continuous level, and discrete solutions remain bounded under standard CFL-type or energy-stability conditions, the boundedness assumptions on $F''$ and $F'''$ hold on the relevant solution trajectories. We adopt this convention throughout: the derivative bounds on $F$ apply on the range of the computed solution, not globally.

For models with gradient-dependent nonlinearities, such as the functionalized Cahn--Hilliard energy in Section~\ref{sec:examples}, the nonlinear energy need not take the pointwise form $\int_\Omega F(u)\,dx$. In such cases we assign all gradient-dependent nonlinear terms to $\Enl{u}$, verify that $\Enl{u}+C>0$ holds for the computed solutions, and use the full variational derivative
\[
G(u)=\frac{\delta E_{\mathrm{nl}}}{\delta u}
\]
in place of the pointwise derivative $F'(u)$. The SAV construction is unchanged at the functional level, though the splitting and the variational derivative are problem-specific.

Defining $\mu=\Lop u+G(u)$ and using \eqref{eq:gradient_flow}, the energy dissipation property takes the form
\[
\frac{dE}{dt}
= \big\langle \mu,u_t\big\rangle
= \big\langle \mu,\Delta\mu\big\rangle
= -\|\nabla\mu\|_{L^2}^2 \le 0.
\]

\subsection{SAV Reformulation}
Following \cite{shen2018scalar}, we introduce the auxiliary variable
\[
w(t) = \sqrt{\Enl{u(t)} + C},
\]
which satisfies, by the chain rule for the functional $E_{\mathrm{nl}}$,
\[
\frac{dw}{dt} = \frac{1}{2\sqrt{\Enl{u} + C}}
\left\langle G(u), \frac{du}{dt}\right\rangle.
\]
The SAV system to be solved is
\begin{align}
\frac{du}{dt} &= \Delta\left(\mathcal{L}u + \frac{w}{\sqrt{\Enl{u} + C}} G(u)\right) =: \Delta \tilde\mu, \notag\\
\frac{dw}{dt} &= \frac{1}{2\sqrt{\Enl{u} + C}} \left\langle G(u), \frac{du}{dt}\right\rangle, \label{eq:sav_r}
\end{align}
where
\[
\tilde\mu = \mathcal{L}u + \frac{w}{\sqrt{\Enl{u}+C}} G(u).
\]

Define the SAV energy
\[
E_{\mathrm{SAV}}(u,w):=\tfrac12\langle u,\mathcal L u\rangle+w^2-C .
\]
Then
\begin{align*}
\frac{d}{dt}E_{\mathrm{SAV}}
&=\langle u_t,\mathcal L u\rangle+2w\,w_t \\
&=\langle u_t,\mathcal L u\rangle+\frac{w}{\sqrt{\Enl{u}+C}}\langle G(u),u_t\rangle \\
&=\langle u_t,\tilde\mu\rangle
=\langle \Delta \tilde\mu,\tilde\mu\rangle\;=-\int_\Omega |\nabla\tilde\mu|^2\,dx\;\le\;0.
\end{align*}
Hence $E_{\mathrm{SAV}}$ is nonincreasing along solutions of the SAV system.
Moreover, when $w^2=\Enl{u}+C$ holds exactly, $E_{\mathrm{SAV}}(u,w)=E(u)$.

\subsection{Variable-Step BDF2 Discretization}

Let $0 = t_0 < t_1 < \cdots < t_N = T$ be a partition and define
\[
\dt_n=t_{n+1}-t_n,\qquad n=0,\ldots,N-1.
\]
For $n\ge1$, define the step ratio
\[
\rho_n = \frac{\dt_n}{\dt_{n-1}} \in \left[ \rho_{\min}, \rho_{\max} \right ] 
\subset \left(0,1 + \sqrt{2}\right).
\]

A variable-step BDF2 formula approximates $u'(t_{n+1})$ by
\begin{equation}
\label{eq:vBDF2}
\delta_t^{\mathrm{vBDF2}}u^{n+1}
:= \frac{d_n^{(0)} u^{n+1} + d_n^{(1)} u^{n} + d_n^{(2)} u^{n-1}}{\dt_n},
\end{equation}
where
\begin{equation}
\label{eq:BDF-coeffs}
d_n^{(0)} = \frac{1 + 2\rho_n}{1 + \rho_n},
\qquad
d_n^{(1)} = -(1 + \rho_n),
\qquad
d_n^{(2)} = \frac{\rho_n^2}{1 + \rho_n}.
\end{equation}
are the BDF2 coefficients from \cite{hairer1987solving}.

We set, for the step producing $(u^{n+1},w^{n+1})$,
\[
\bar{u}^{\,n+1}:=(1+\rho_n)\,u^{n}-\rho_n\,u^{n-1},
\qquad
q^{n+1}:=\frac{w^{\,n+1}}{\sqrt{\Enl{\bar u^{\,n+1}}+C}},
\]
The SAV scheme is commonly combined with the stabilization technique as is done in \cite{Zhang2020,ZhangC2020FCH} which yields the stabilized SAV (SSAV) scheme. It's shown in \cite{Zhang2020} that the SSAV allows us to take larger time steps and save computational costs. Additionally, with larger time steps the SAV loses accuracy whereas the SSAV scheme doesn't. In \cite{ZhangC2020FCH}, it's shown in the accuracy test for the FCH model that the SAV scheme significantly loses accuracy when $\dt>5\times10^{-4}$ but the SSAV scheme maintains second order accuracy. 

The SSAV--vBDF2 scheme (linearly implicit in $u^{n+1}$) takes the form, for $n\ge1$, as
\begin{align}
\delta_t^{\mathrm{vBDF2}} u^{n+1}
&= \Delta\left(\mathcal{L}u^{n+1} + q^{n+1}\,G(\bar u^{n+1}) + S(u^{n+1}-\bar u^{n+1})\right)
\;=\; \Delta\tilde\mu^{\,n+1},
\label{eq:scheme-u}
\\[2mm]
\delta_t^{\mathrm{vBDF2}} w^{n+1}
&= \frac{1}{2\sqrt{\Enl{\bar u^{n+1}}+C}}\,
\Big\langle G(\bar u^{n+1}),\, \delta_t^{\mathrm{vBDF2}} u^{n+1}\Big\rangle,
\label{eq:scheme-w}
\end{align}
where $S>0$ is a stabilizing parameter and $\tilde\mu^{\,n+1} := \Lop u^{n+1} + q^{n+1} G(\bar u^{n+1})+S(u^{n+1}-\bar u^{n+1})$ is the
discrete chemical potential. 

With $\Delta$ negative semidefinite under
periodic or homogeneous Neumann boundary conditions, this is the
dissipative direction, consistent with
\eqref{eq:gradient_flow}.  Mass conservation comes for free as applying $\Delta$ zeros out changes to the mean.

\section{Energy-Rate Equidistribution}
\label{sec:equidistribution}

The energy dissipation rate $dE/dt$ may vary over many orders of
magnitude during a gradient-flow simulation. During rapid coarsening,
interface reconnection, or defect annihilation the energy can change
quickly, while near equilibrium it may change only very slowly. A
uniform temporal mesh therefore places the same number of time steps in
regions with very different levels of activity. Our goal is to
redistribute the time steps so that more are taken where the energy is
changing rapidly and fewer where it is nearly constant.

This is naturally viewed as a one-dimensional version of
$r$--adaptivity.  Introduce a computational coordinate
$\xi\in[0,1]$ and a strictly increasing map
\[
    t=t(\xi), \qquad t(0)=0,\qquad t(1)=T.
\]
A uniform mesh in $\xi$ then generates a nonuniform mesh in physical
time. Given a positive monitor function $g$, the usual
equidistribution principle is to choose the map so that
\[
    g(t(\xi))\,t'(\xi)=\text{constant}.
\]
Thus regions in which $g$ is large receive smaller physical time
steps, while regions in which $g$ is small receive larger ones
\cite{deboor1987,RusselBook}.

For gradient flows, the energy itself provides a natural quantity from
which to construct the monitor.  For example, the choice
\[
    g(\dot E)=\sqrt{1+\beta^2\dot E^2}
\]
corresponds to equidistributing arclength along the scaled energy curve
\[
    t\longmapsto (t,\beta E(t)).
\]
Other possibilities include
\[
    g(\dot E)=|\dot E|,
    \qquad
    g(\dot E)=\sqrt{|\dot E|},
\]
or combinations of these monitors.  All of them have the same basic
effect: rapid energy change produces a denser temporal mesh, while
nearly flat portions of the energy curve are traversed with larger
steps.

Figure~\ref{fig:Monitors} illustrates this idea for two simple energy
histories.  The first,
\[
    E_1(t)=e^{-t},
\]
has a smoothly decreasing dissipation rate, while
\[
    E_2(t)=\tanh(10-t)
\]
contains a relatively sharp transition between two slowly varying
regimes.  The physical-time plots show the nonuniform distribution of
the time points produced by the monitors.  When the same energy curves
are plotted against the corresponding computational coordinate, the
rapid changes are spread over a larger portion of the computational
interval.  Equidistribution can therefore be viewed as a
reparametrization of time that resolves changes in the energy more
uniformly.

\begin{figure}[!th]
    \centering
    \includegraphics[width=0.97\textwidth]{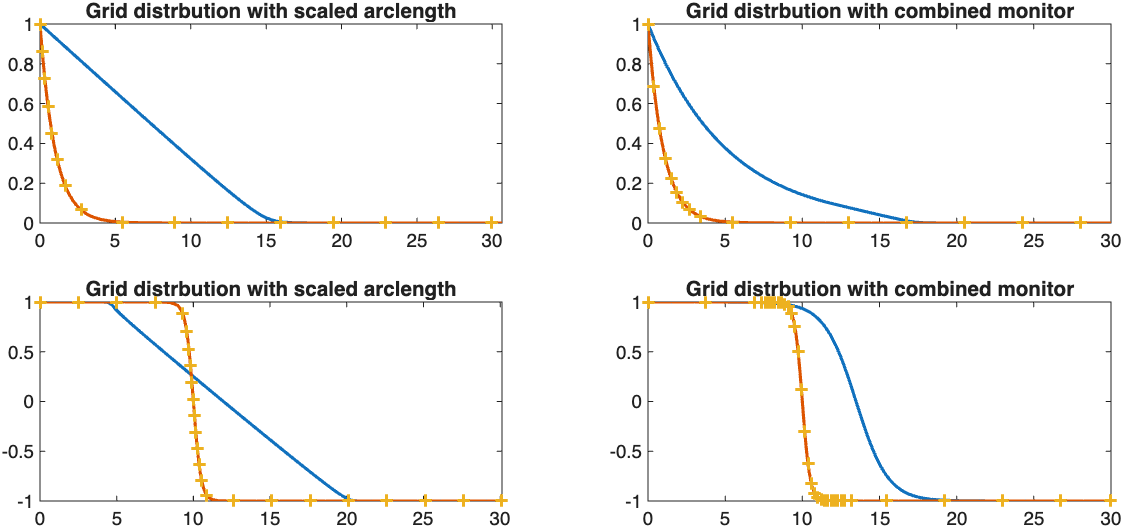}
    \caption{Comparison of a scaled-arclength monitor and a combined
    energy-rate monitor for the energy histories
    $E_1(t)=e^{-t}$ and $E_2(t)=\tanh(10-t)$.  The red curves show the
    energies in physical time and the blue curves show the same energies
    in the corresponding computational coordinate.  The parameters have
    been chosen so that approximately half of the grid points lie in the
    transition region.}
    \label{fig:Monitors}
\end{figure}

The remaining question is which monitor is appropriate for the
adaptive integration of a gradient flow.  We answer this by relating
the energy dissipation rate to the symmetric energy used in our
acceptance criterion.  We first show that the leading discrepancy
between the symmetric and physical energies is proportional to
\[
    \dt\,|\dot E|.
\]
This identifies $g=|\dot E|$ as the natural monitor and leads to an
asymptotic minimax result: equidistributing $|\dot E|$ minimizes the
leading-order maximum symmetric-energy discrepancy for a fixed number
of time steps.  We then consider the behaviour of this monitor near
equilibrium, introduce the combined $|\dot E|$ and
$\sqrt{|\dot E|}$ rule used in the computations, and relate the
scaled-arclength controller of \cite{Qiao2011,Zhang2012} to the same
framework.

Local truncation error (LTE) control, such as the AM3-based controller of \cite{christlieb2024FCH}, adjusts $\Delta t$ to keep a local error estimator below a tolerance, without reference to the energy structure. Our controller adjusts $\Delta t$ to equidistribute the energy dissipation rate, which directly controls the departure of the symmetric energy from the physical energy. For gradient flows, this means the step-size selection is informed by the physics of the problem, not just the numerical error. Both approaches yield second-order accuracy; they differ in which error they control. Section~\ref{sec:examples} compares them on a benchmark problem.

Nothing in this section depends on the SSAV reformulation or on the
BDF2 discretization.  The analysis is first carried out for a smooth
gradient-flow trajectory and its continuous energy.  In
Section~\ref{sec:algorithm}, the continuous dissipation rate is
replaced by a computable discrete estimator and the resulting
controller is coupled to SSAV--vBDF2.

\subsection{Symmetric energy and its relation to dissipation rate}

Define the practical symmetric energy
\begin{equation}
\label{eq:practical-sym-energy}
\Esym^n
:=
\frac12\Big(E(u^n) + E(2u^n - u^{n-1})\Big).
\end{equation}

Note that $2u^n - u^{n-1} = u^n + \delta^n$, where
$\delta^n=u^n-u^{n-1}$. For any sequence $\{u^n\}$ and any twice
continuously Fr\'echet differentiable energy $E$ with bounded second
derivative on the relevant region of state space,
\begin{equation}\label{eq:Esym-defect-general}
\Esym^n-E(u^n)
=
\tfrac12 DE(u^n)[\delta^n]
+
O(\|\delta^n\|^2).
\end{equation}
If $E\in C^3$, the second-order term may be written explicitly as
\[
\Esym^n-E(u^n)
=
\tfrac12 DE(u^n)[\delta^n]
+
\tfrac14D^2E(u^n)[\delta^n,\delta^n]
+
O(\|\delta^n\|^3).
\]

We now look at points sampled from an exact smooth trajectory.
For $u^n=u(t_n)$,
\[
u(t_{n-1})
=
u(t_n)-\dt_{n-1}u_t(t_n)+O(\dt_{n-1}^2),
\]
and hence
\[
\delta^n
=
\dt_{n-1}u_t(t_n)+O(\dt_{n-1}^2).
\]
Substituting this into \eqref{eq:Esym-defect-general} gives
\[
\Esym^n-E(u(t_n))
=
\tfrac12 \dt_{n-1}\frac{dE}{dt}(t_n)
+
O(\dt_{n-1}^2).
\]

\begin{prop}[Energy--dissipation linkage]
\label{prop:Esym-linkage}
Let $u:[0,T]\to X$ be a $C^2$ trajectory and let
$E:X\to\mathbb R$ be twice continuously Fr\'echet differentiable with
bounded second derivative on a neighbourhood of the trajectory. For
$u^n=u(t_n)$, define $\Esym^n$ by
\eqref{eq:practical-sym-energy}. Then
\begin{equation}\label{eq:Esym-linkage}
\Esym^n-E(u(t_n))
=
\tfrac12 \dt_{n-1}\frac{dE}{dt}(t_n)
+
O(\dt_{n-1}^2).
\end{equation}
Consequently, for a gradient flow with $\dot E\le0$,
\[
|\Esym^n-E(u(t_n))|
=
\tfrac12\dt_{n-1}|\dot E(t_n)|
+
O(\dt_{n-1}^2).
\]
\end{prop}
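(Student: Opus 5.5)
The argument has two stages: a Taylor expansion of $E$ about $u^n$ in the direction $\delta^n:=u^n-u^{n-1}$, followed by substituting the expansion of $\delta^n$ that comes from the smoothness of the trajectory. Both steps were sketched just before the statement; the proof will make the remainders uniform and explicit.

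\emph{Step 1: expand the symmetric energy in $\delta^n$.} Since $2u^n-u^{n-1}=u^n+\delta^n$, Taylor's theorem with integral remainder gives
\[
E(u^n+\delta^n)=E(u^n)+DE(u^n)[\delta^n]+\int_0^1(1-s)\,D^2E(u^n+s\delta^n)[\delta^n,\delta^n]\,ds .
\]
Write $M$ for the bound on $\|D^2E\|$ over a neighbourhood $U$ of the trajectory. Then
\[
\Esym^n-E(u^n)=\tfrac12 DE(u^n)[\delta^n]+R_1, \qquad |R_1|\le \tfrac14 M\|\delta^n\|^2,
\]
which is \eqref{eq:Esym-defect-general}. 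To apply this, the segment $u^n+s\delta^n$, $s\in[0,1]$, must lie in $U$. This holds once $\dt_{n-1}$ is small enough, because $\|\delta^n\|\le \dt_{n-1}\sup\|u_t\|$, the trajectory is compact, and so $U$ can be taken to contain a uniform tube around it. Checking this containment is the only point needing care.

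\emph{Step 2: expand $\delta^n$ along the trajectory.} Since $u\in C^2$,
\[
\delta^n=\dt_{n-1}u_t(t_n)+r_n, \qquad \|r_n\|\le \tfrac12\dt_{n-1}^2\sup_{[0,T]}\|u_{tt}\|,
\]
and $\|\delta^n\|=O(\dt_{n-1})$ uniformly in $n$. Because $DE$ is continuous on the compact trajectory, $\|DE(u(t_n))\|$ is bounded by some $K$. Therefore
\[
\tfrac12 DE(u^n)[\delta^n]=\tfrac12\dt_{n-1}DE(u(t_n))[u_t(t_n)]+O(K\dt_{n-1}^2),
\]
and the chain rule gives $DE(u(t_n))[u_t(t_n)]=\frac{dE}{dt}(t_n)$. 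Combining this with $R_1=O(\dt_{n-1}^2)$ from Step 1 yields \eqref{eq:Esym-linkage}, with a constant depending only on $M$, $K$, $\sup\|u_t\|$ and $\sup\|u_{tt}\|$, and hence uniform in $n$.

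\emph{Step 3: the consequence for gradient flows.} When $\dot E\le 0$, the identity \eqref{eq:Esym-linkage} gives
\[
|\Esym^n-E(u(t_n))|=\bigl|\tfrac12\dt_{n-1}\dot E(t_n)+O(\dt_{n-1}^2)\bigr|.
\]
The reverse triangle inequality shows that this differs from $\tfrac12\dt_{n-1}|\dot E(t_n)|$ by at most the $O(\dt_{n-1}^2)$ term, which is exactly the stated form. The argument is routine apart from stating the neighbourhood and uniformity conditions carefully; no earlier result of the paper is needed beyond the definition \eqref{eq:practical-sym-energy}.
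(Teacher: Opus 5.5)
Your proof is correct and follows the same route as the paper. You Taylor-expand $E$ about $u^n$ in the direction $\delta^n$ to get $\Esym^n-E(u^n)=\tfrac12 DE(u^n)[\delta^n]+O(\|\delta^n\|^2)$, substitute $\delta^n=\dt_{n-1}u_t(t_n)+O(\dt_{n-1}^2)$, and apply the chain rule; your explicit remainder constants, the tube-around-the-compact-trajectory argument, and the reverse triangle inequality for the absolute-value form make rigorous what the paper leaves implicit.
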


\begin{proof}
Equation~\eqref{eq:Esym-defect-general} gives
\[
\Esym^n-E(u(t_n))
=
\tfrac12DE(u(t_n))[\delta^n]+O(\|\delta^n\|^2).
\]
Using
$\delta^n=\dt_{n-1}u_t(t_n)+O(\dt_{n-1}^2)$ and
\[
DE(u(t_n))[u_t(t_n)]
=
\frac{d}{dt}E(u(t))\bigg|_{t=t_n}
\]
gives \eqref{eq:Esym-linkage}.
\end{proof}

\begin{remark}
The closeness $\Esym^n = E(u^n) + O(h)$ is first-order, not second-order. The leading $O(h)$ term is $\frac12 \dt_{n-1}\,dE/dt$, which does not vanish for solutions undergoing active dissipation.  We exploit this gap for adaptivity. The departure from $E(u^n)$ encodes how fast the energy is changing, and controlling it through the step size is the basis of the equidistribution framework developed below.
\end{remark}

An adaptive controller that enforces $\dt_{n-1} |dE/dt|(t_n) \lesssim \tau_1$ directly bounds this departure by $O(\tau_1)$ while keeping the $O(\dt_{n-1}^2)$ remainder small:
\[
|\Esym^n - E(u^n)| \;\lesssim\; \tfrac12\,\tau_1 + C\,\dt_{n-1}^2.
\]

\subsection{Equidistribution framework}
\label{sec:equidist-theory}

Proposition~\ref{prop:Esym-linkage} shows that the leading symmetric-energy discrepancy is controlled by the product $\dt_{n-1}|dE/dt|$. We now formulate the corresponding minimax problem in terms of a continuous mesh map.

Let $\xi\in[0,1]$ denote a computational coordinate and let
$t=t(\xi)$ be a continuously differentiable, strictly increasing map
with
\[
t(0)=0,\qquad t(1)=T.
\]
A uniform computational mesh $\xi_n=n/N$ induces the physical nodes
$t_n=t(\xi_n)$. If
\[
g(t):=|\dot E(t)|,
\]
then
\[
\dt_n=t_{n+1}-t_n
=
\frac1N t'(\xi_n)+O(N^{-2}),
\]
and Proposition~\ref{prop:Esym-linkage} gives the leading symmetric-energy
discrepancy
\[
|\Esym^n-E(u(t_n))|
=
\frac1{2N}g(t(\xi_n))t'(\xi_n)+O(N^{-2}).
\]

\begin{definition}[Equidistribution \cite{deboor1987}]
A mesh map $t:[0,1]\to[0,T]$ \emph{equidistributes} a positive monitor
function $g$ if
\[
g(t(\xi))\,t'(\xi)=\sigma
\qquad\text{for all }\xi\in[0,1],
\]
where
\[
\sigma=\int_0^T g(s)\,ds.
\]
\end{definition}

\begin{prop}[Asymptotic minimax equidistribution]
\label{prop:equidist-optimal}
Let $E\in C^2([0,T])$ and suppose
\[
g(t)=|\dot E(t)|>0,\qquad 0\le t\le T.
\]
Let $t:[0,1]\to[0,T]$ be a $C^1$, strictly increasing mesh map with
$t(0)=0$ and $t(1)=T$, and let $t_n=t(n/N)$. Then the leading-order
maximum symmetric-energy discrepancy is
\[
\max_n|\Esym^n-E(u(t_n))|
=
\frac1{2N}
\left\|
g(t(\xi))t'(\xi)
\right\|_{L^\infty(0,1)}
+
O(N^{-2}).
\]
Among all such mesh maps, the leading term is minimized by the
equidistribution condition
\begin{equation}\label{eq:continuous-equidist}
g(t(\xi))t'(\xi)
=
\int_0^T g(s)\,ds.
\end{equation}
The minimum leading-order value is
\[
\frac1{2N}\int_0^T|\dot E(t)|\,dt.
\]
For a monotonically decaying energy this becomes
\begin{equation}
\label{eq:minimax-value}
\frac{E(0)-E(T)}{2N}.
\end{equation}
\end{prop}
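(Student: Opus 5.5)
The proof has two parts: an asymptotic expansion that produces the leading-order formula, and an elementary variational argument for the minimax claim.

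\emph{Expansion.} Fix a $C^1$, strictly increasing map $t(\xi)$ and set $t_n=t(n/N)$. By the mean value theorem,
\[
\dt_{n-1}=t_n-t_{n-1}=\tfrac1N t'(\eta_n)
\]
for some $\eta_n\in(\xi_{n-1},\xi_n)$. Uniform continuity of $t'$ on $[0,1]$ then gives
\[
\dt_{n-1}=\tfrac1N t'(\xi_n)+o(N^{-1}),
\]
and this becomes $O(N^{-2})$ if $t'$ is Lipschitz, which I will assume or impose as a mild extra hypothesis. In particular $\max_n\dt_{n-1}\le \|t'\|_\infty/N$.

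Substituting into Proposition~\ref{prop:Esym-linkage} gives, for each $n$,
\[
|\Esym^n-E(u(t_n))|=\tfrac1{2N}\,g(t(\xi_n))\,t'(\xi_n)+O(N^{-2}).
\]
The $O(N^{-2})$ constant is uniform in $n$, because the remainder in Proposition~\ref{prop:Esym-linkage} depends only on bounds for $u$, $u_t$, $u_{tt}$ and $D^2E$ along the compact trajectory. Taking the maximum over $n$, the discrete maximum of the continuous function $\phi(\xi):=g(t(\xi))t'(\xi)$ over the nodes $\xi_n$ differs from $\|\phi\|_{L^\infty(0,1)}$ by at most the modulus of continuity of $\phi$ at scale $1/N$. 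This is $O(N^{-1})$ if $\phi$ is Lipschitz, which again holds when $t\in C^{1,1}$, since $g=|\dot E|$ is $C^1$ wherever $\dot E\ne0$, and $g>0$ by assumption. After multiplying by $\tfrac1{2N}$ this contributes $O(N^{-2})$, which proves the displayed formula.

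\emph{Minimax.} For any admissible map, the change of variables $s=t(\xi)$ gives
\[
\int_0^1 \phi(\xi)\,d\xi=\int_0^T g(s)\,ds=:\sigma,
\]
a constant independent of the map. Since the maximum of a function dominates its average, $\|\phi\|_{L^\infty}\ge\sigma$. Equality forces $\phi\equiv\sigma$, because $\phi$ is continuous and a strict inequality at one point would make the average less than the maximum.

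Conversely, the map defined implicitly by
\[
\int_0^{t(\xi)} g(s)\,ds=\sigma\xi
\]
is $C^1$ and strictly increasing, since its inverse $\xi(t)$ has derivative $g/\sigma>0$. It satisfies $t(0)=0$ and $t(1)=T$, and it attains $\phi\equiv\sigma$. Moreover $t'=\sigma/g(t)$ is $C^1$ where $g$ is, so this map lies in the regularity class used in the expansion. Therefore the minimum of the leading term is $\sigma/(2N)$, attained exactly by the equidistribution condition~\eqref{eq:continuous-equidist}. For $\dot E\le0$ we have $|\dot E|=-\dot E$, so $\sigma=E(0)-E(T)$, which gives \eqref{eq:minimax-value}.

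\emph{Main obstacle.} The delicate point is the uniformity of the remainder. It needs the $O(\dt^2)$ constant of Proposition~\ref{prop:Esym-linkage} to be uniform along the trajectory, and it needs enough regularity of the mesh map ($t'$ Lipschitz) for the node-sampling error to be $O(N^{-2})$ rather than merely $o(N^{-1})$. I would state the result with "leading-order" meaning the $N^{-1}$ coefficient, so that for a general $C^1$ map the remainder is $o(N^{-1})$; this weaker form is all the minimax comparison requires.
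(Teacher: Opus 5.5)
Your proof is correct and follows the paper's argument: the change of variables $\int_0^1 g(t(\xi))t'(\xi)\,d\xi=\int_0^T g(s)\,ds$, the bound of the average by the maximum with equality exactly when $\phi$ is constant, and $\sigma=E(0)-E(T)$ when $\dot E\le 0$. Your additional care fills in points that the paper's proof covers only by citing the expansion stated before the definition. First, you note that the $O(N^{-2})$ remainder needs $t'$ Lipschitz, and under bare $C^1$ it is only $o(N^{-1})$. Second, you control the gap between the nodal maximum and $\|\phi\|_{L^\infty}$. Third, you exhibit an equidistributing map that lies in the admissible class.
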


\begin{proof}
Set
\[
q(\xi):=g(t(\xi))t'(\xi).
\]
Since $t$ is increasing,
\[
\int_0^1q(\xi)\,d\xi
=
\int_0^1g(t(\xi))t'(\xi)\,d\xi
=
\int_0^Tg(t)\,dt.
\]
Because $q\ge0$,
\[
\|q\|_{L^\infty(0,1)}
\ge
\int_0^1q(\xi)\,d\xi
=
\int_0^Tg(t)\,dt.
\]
Equality holds precisely when $q$ is constant, which is
\eqref{eq:continuous-equidist}. Combining this minimax identity with
the leading-order expansion above proves the result. When $\dot E\le0$,
\[
\int_0^T|\dot E(t)|\,dt=E(0)-E(T),
\]
giving \eqref{eq:minimax-value}.
\end{proof}

\begin{remark}
The positivity assumption $|\dot E|>0$ excludes intervals on which the
energy is exactly flat. On such intervals the ideal monitor is
degenerate: at leading order the symmetric-energy discrepancy places no
restriction on the mesh density. This is one reason to regularize the
energy-rate monitor in the practical controller below.
\end{remark}

This result identifies $g(\dot E) = |\dot E|$ as the asymptotically optimal monitor for controlling the leading-order maximum symmetric-energy discrepancy. Additionally, we see why energy-rate--based controllers work as they approximately equidistribute the quantity that determines this discrepancy. Equation~\eqref{eq:minimax-value} is a useful practical guide. Halving the leading-order discrepancy requires doubling the step count. Problems with a large total energy drop are intrinsically more expensive at fixed tolerance, independent of how the drop is distributed in time.

\begin{prop}[Monitor function properties]
\label{prop:monitors}
Let $\dot E_n$ denote the energy dissipation rate associated with step $n$.
\begin{itemize}
\item[\emph{(a)}] The rule $\dt_n \le \tau_1\,/\,|\dot E_n|$ equidistributes the monitor $g = |\dot E|$ and ensures the symmetric-energy discrepancy satisfies
$|\Esym^n - E(u^n)| \le \tau_1/2 + O(\dt_{n-1}^2)$.

\item[\emph{(b)}] The rule $\dt_n \le \tau_2\,/\,\sqrt{|\dot E_n|}$ equidistributes $g = \sqrt{|\dot E|}$. Near equilibrium, where $|\dot E_n|\to 0$ and rule \emph{(a)} places no effective restriction on the step, rule \emph{(b)} caps the growth at $\dt_n = \tau_2/\sqrt{|\dot E_n|}$; the associated energy discrepancy $\tfrac12\tau_2\sqrt{|\dot E_n|}$ still tends to zero, so accuracy is not sacrificed. Away from equilibrium, where $|\dot E_n| \gg (\tau_1/\tau_2)^2$, rule \emph{(b)} is the weaker of the two and is inactive.

\item[\emph{(c)}] Taking $\dt_n = \alpha\min(\tau_1/|\dot E_n|,\;\tau_2/\sqrt{|\dot E_n|})$ with safety factor $\alpha\in(0,1)$ selects rule \emph{(a)} when $|\dot E_n| > (\tau_1/\tau_2)^2$ and rule \emph{(b)} otherwise. This corresponds to the combined monitor
\[
g_{\mathrm{comb}}(\dot E) = \max\!\Big(\,|\dot E|\,/\,\tau_1,\;\;\sqrt{|\dot E|}\,/\,\tau_2\,\Big),
\]
which adapts between optimal discrepancy control during rapid dissipation and controlled step growth near equilibrium.
\end{itemize}
\end{prop}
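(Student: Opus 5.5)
The plan is to treat each part as a discrete analogue of the equidistribution condition in Definition~\ref{sec:equidist-theory}'s framework, and to use Proposition~\ref{prop:Esym-linkage} to pass from step-size rules to discrepancy bounds. For (a), the key observation is that the rule $\dt_n|\dot E_n|\le\tau_1$, taken with equality, says $g(t_n)\,\dt_n=\tau_1$ is constant in $n$. Since $\dt_n=\frac1N t'(\xi_n)+O(N^{-2})$, this is the discrete form of \eqref{eq:continuous-equidist} with $\sigma/N=\tau_1$. For the discrepancy bound I would apply \eqref{eq:Esym-linkage} at node $n$, which involves $\dt_{n-1}$ and $\dot E(t_n)$. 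The rule applied at the previous step controls $\dt_{n-1}|\dot E_{n-1}|\le\tau_1$. The mismatch between $\dot E_{n-1}$ and $\dot E(t_n)$ is $O(\dt_{n-1})$ because $E\in C^2$ along the trajectory. Multiplied by $\dt_{n-1}$, this mismatch is absorbed into the $O(\dt_{n-1}^2)$ remainder, giving $|\Esym^n-E(u^n)|\le\tau_1/2+O(\dt_{n-1}^2)$.

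For (b), the same computation with $g=\sqrt{|\dot E|}$ shows that equality in the rule gives $\sqrt{|\dot E_n|}\,\dt_n=\tau_2$, which is again equidistribution. Substituting $\dt_{n-1}\le\tau_2/\sqrt{|\dot E_{n-1}|}$ into \eqref{eq:Esym-linkage} gives a leading discrepancy $\frac12\tau_2|\dot E|/\sqrt{|\dot E|}=\frac12\tau_2\sqrt{|\dot E_n|}$, up to the same shift error. This tends to zero as $|\dot E_n|\to0$. To compare the two rules, note that $\tau_2/\sqrt{x}\ge\tau_1/x$ if and only if $x\ge(\tau_1/\tau_2)^2$. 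So for large $|\dot E_n|$ rule (b) permits the larger step and is inactive, while for small $|\dot E_n|$ rule (a) allows $\dt_n\sim\tau_1/|\dot E_n|\to\infty$ and rule (b) is the binding cap.

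For (c), the same threshold computation shows that the minimum selects (a) exactly when $|\dot E_n|>(\tau_1/\tau_2)^2$ and selects (b) otherwise. Because $1/\min(a,b)=\max(1/a,1/b)$, the selected step is $\dt_n=\alpha/g_{\mathrm{comb}}(\dot E_n)$. Hence $g_{\mathrm{comb}}\,\dt_n=\alpha$ is constant, so the combined rule equidistributes $g_{\mathrm{comb}}$. The safety factor $\alpha<1$ only tightens the bounds from (a) and (b).

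The main obstacle is the index shift in (a): the rule controls $\dt_{n-1}|\dot E_{n-1}|$ while the discrepancy involves $\dt_{n-1}|\dot E(t_n)|$. I would handle it with a Lipschitz bound on $\dot E$. Near equilibrium in (b) one also needs $\dt$ to stay bounded so that the $O(\dt^2)$ remainder remains small. For this I would invoke an implicit $\dt_{\max}$, or equivalently the cap supplied by rule (b) together with a lower bound on $|\dot E|$, and state the result as asymptotic in that regime.
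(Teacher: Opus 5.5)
Your proposal is correct and follows essentially the same route as the paper. Like the paper, you substitute each step rule into the linkage \eqref{eq:Esym-linkage} for (a) and (b), locate the crossover $|\dot E|=(\tau_1/\tau_2)^2$ by comparing $\tau_1/|\dot E|$ with $\tau_2/\sqrt{|\dot E|}$, and use that a minimum of step rules is a maximum of monitors for (c). Your version is in fact more careful than the paper's, which ignores both the index shift between the rule at step $n-1$ and $\dot E(t_n)$ in (a) and the fact that the $O(\dt^2)$ remainder need not be small when rule (b) allows large steps near equilibrium.
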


\begin{proof}
Part (a): substituting $\dt_n = \tau_1/|\dot E_n|$ into the energy--dissipation linkage \eqref{eq:Esym-linkage} gives $|\Esym^n - E(u^n)| = \tau_1/2 + O(h^2)$.

Part (b): if $|\dot E_n| = \varepsilon^2$ with $\varepsilon\to 0$, then $\dt_n = \tau_2/\varepsilon$, which is finite for every $\varepsilon>0$ but unbounded as $\varepsilon \to 0$; the corresponding energy discrepancy is $\frac12 (\tau_2/\varepsilon)\varepsilon^2 = \frac12 \tau_2\varepsilon \to 0$. Rule (a) gives $\tau_1/\varepsilon^2$, which grows faster, so (b) is the binding constraint in this limit.

Part (c): the minimum of two step-size rules corresponds to the maximum of the associated monitors. The crossover occurs when $\tau_1/|\dot E| = \tau_2/\sqrt{|\dot E|}$, i.e.\ $\sqrt{|\dot E|} = \tau_1/\tau_2$, i.e.\ $|\dot E| = (\tau_1/\tau_2)^2$.
\end{proof}

\begin{corollary}[Uniform local discrepancy bound for the combined rule]
\label{cor:uniform-local}
Let $\dt_n = \alpha\min\big(\tau_1/|\dot E_n|,\ \tau_2/\sqrt{|\dot E_n|}\big)$
with $\alpha \in (0,1)$. Then, in \emph{both} regimes,
\[
\dt_n\,|\dot E_n| \;\le\; \alpha\,\tau_1,
\qquad\text{hence}\qquad
|\Esym^n - E(u^n)| \;\le\; \tfrac12\alpha\tau_1 + O(\dt_n^2).
\]
\end{corollary}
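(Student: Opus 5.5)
The plan is to prove the corollary directly from the definition of the combined rule and then feed the resulting product bound into Proposition~\ref{prop:Esym-linkage}. The key observation is that a minimum is bounded by each of its arguments. No case analysis on which rule is active is therefore needed for the product bound, even though Proposition~\ref{prop:monitors}(c) describes the two regimes separately.

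\emph{Step 1 (product bound).} If $\dot E_n=0$, the product $\dt_n|\dot E_n|$ vanishes trivially; interpret the rule with the usual convention that the step is then capped by $\dt_{\max}$. Otherwise, since $\min(a,b)\le a$,
\[
\dt_n|\dot E_n| \;=\; \alpha\min\!\Big(\tfrac{\tau_1}{|\dot E_n|},\tfrac{\tau_2}{\sqrt{|\dot E_n|}}\Big)|\dot E_n| \;\le\; \alpha\,\tau_1 .
\]
For completeness, I will also record the regime picture. When $|\dot E_n|>(\tau_1/\tau_2)^2$, equality holds. When $|\dot E_n|\le(\tau_1/\tau_2)^2$, the product equals $\alpha\tau_2\sqrt{|\dot E_n|}\le\alpha\tau_1$. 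This shows that the bound holds in \emph{both} regimes, as claimed.

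\emph{Step 2 (transfer to the discrepancy).} Proposition~\ref{prop:Esym-linkage} involves $\dt_{n-1}$ and $\dot E(t_n)$, whereas the corollary is stated with $\dt_n$ and $\dot E_n$. I will identify these by the same index convention used in Proposition~\ref{prop:monitors}(a): $\dot E_n$ is the rate associated with the step whose length multiplies it in the linkage. Concretely, I read the rule as fixing the step that ends at the node where the discrepancy is measured. With that identification, \eqref{eq:Esym-linkage} gives
\[
|\Esym^n-E(u^n)| \;\le\; \tfrac12\,\alpha\tau_1 + O(\dt_n^2).
\]

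\emph{Main obstacle.} The delicate point is this index and time-level bookkeeping, not the inequality itself. If $\dot E_n$ is evaluated at $t_{n-1}$ rather than $t_n$, one needs $|\dot E(t_n)-\dot E(t_{n-1})|=O(\dt)$, which follows from $E\in C^2$. The mismatch contributes $\tfrac12\dt\cdot O(\dt)$, which is absorbed into the $O(\dt_n^2)$ remainder. Under the step-ratio bound $\rho_n\in[\rho_{\min},\rho_{\max}]$, $\dt_{n-1}$ and $\dt_n$ are comparable, so $O(\dt_{n-1}^2)=O(\dt_n^2)$. Finally, I will note that the remainder constant is uniform in $n$ because the second derivative of $E$ is bounded on a neighbourhood of the trajectory.
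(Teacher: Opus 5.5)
Your proof is correct and matches the paper's: the paper establishes the product bound by exactly the two-regime computation you record for completeness, while your observation $\min(a,b)\le a$ gives it in one line and makes plain that the $\tau_2$ rule plays no role in the bound. The paper then passes to the discrepancy with a bare ``hence'' via Proposition~\ref{prop:Esym-linkage}. Your Step~2 (pairing $\dt_{n-1}$ with $\dot E(t_n)$, absorbing the $O(\dt)$ shift in $\dot E$, and using bounded step ratios to trade $O(\dt_{n-1}^2)$ for $O(\dt_n^2)$) fills in bookkeeping the paper leaves implicit rather than departing from its argument.
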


\begin{proof}
If $|\dot E_n| > (\tau_1/\tau_2)^2$ then rule (a) is active and
$\dt_n|\dot E_n| = \alpha\tau_1$. Otherwise rule (b) is active and
$\dt_n |\dot E_n| = \alpha\tau_2\sqrt{|\dot E_n|}
\le \alpha\tau_2\,(\tau_1/\tau_2) = \alpha\tau_1$.
\end{proof}

Corollary~\ref{cor:uniform-local}  makes the combined monitor
useful in practice as the second rule caps step growth near equilibrium
without weakening the energy-error guarantee established for the
optimal monitor. It is also the estimate that drives the uniform
physical-energy bound in Proposition~\ref{prop:uniform-physical}.

\begin{prop}[Arclength connection]
\label{prop:arclength}
The regularized step-size rule \cite{Qiao2011}
\begin{equation}
\label{eq:scaledAL-prop}
\dt_n \le \frac{\Delta t_{\max}}{\sqrt{1+\beta^2\,|\dot E_n|^2}},
\qquad
\beta := \frac{\Delta t_{\max}}{\tau_1},
\end{equation}
equidistributes the arclength monitor $g(\dot E) = \sqrt{1+\beta^2\dot E^2}$ along the energy curve $t \mapsto (t, \beta E(t))$. The parameter $\beta$ is thus the ratio of the largest permitted step to the energy-error tolerance of Proposition~\ref{prop:monitors}(a); equivalently, $1/\beta = \tau_1/\Delta t_{\max}$ is the dissipation rate at which the two constraints balance.
\end{prop}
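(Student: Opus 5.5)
The plan is to verify the three claims in turn: the equidistribution statement, the identification of $\beta$, and the balance interpretation. Each reduces to a short computation once the step rule is rewritten in the form $g\,\dt_n=\text{const}$. All of it rests on the definition of equidistribution and on Proposition~\ref{prop:monitors}(a).

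\emph{Equidistribution.} I would first compute the arclength element of the scaled energy curve $t\mapsto(t,\beta E(t))$. This gives
\[
ds=\sqrt{1+\beta^2\dot E^2}\,dt ,
\]
so the natural monitor is $g(\dot E)=\sqrt{1+\beta^2\dot E^2}$. Taking equality in \eqref{eq:scaledAL-prop} and multiplying through yields
\[
g(\dot E_n)\,\dt_n=\Delta t_{\max}.
\]
This is a discrete version of $g(t(\xi))t'(\xi)=\text{const}$. Passing to the mesh map as in Section~\ref{sec:equidist-theory}, with $\dt_n=t'(\xi_n)/N+O(N^{-2})$, the constant $\Delta t_{\max}$ plays the role of $\sigma/N$. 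Hence each step traverses (to leading order) the same arclength $\Delta t_{\max}$ along the scaled curve, with $N\approx\sigma/\Delta t_{\max}$ and $\sigma=\int_0^T g\,dt$.

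\emph{Interpretation of $\beta$.} Next I would examine the two limiting regimes of the rule.
\begin{itemize}
\item When $\beta|\dot E_n|\ll1$, the rule reduces to $\dt_n\le\Delta t_{\max}$, so $\Delta t_{\max}$ is the largest permitted step.
\item When $\beta|\dot E_n|\gg1$, the rule becomes $\dt_n\le\Delta t_{\max}/(\beta|\dot E_n|)$. Matching this to rule (a) of Proposition~\ref{prop:monitors}, $\dt_n\le\tau_1/|\dot E_n|$, forces $\Delta t_{\max}/\beta=\tau_1$, i.e.\ $\beta=\Delta t_{\max}/\tau_1$.
\end{itemize}
With this choice, the large-rate regime inherits the discrepancy bound $|\Esym^n-E(u^n)|\le\tau_1/2+O(\dt_{n-1}^2)$. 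More generally, $\dt_n|\dot E_n|\le\Delta t_{\max}|\dot E_n|/\sqrt{1+\beta^2\dot E_n^2}<\Delta t_{\max}/\beta=\tau_1$ holds uniformly.

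\emph{Balance point.} Finally, the cap $\Delta t_{\max}$ and the rule $\tau_1/|\dot E|$ coincide exactly when $|\dot E|=\tau_1/\Delta t_{\max}=1/\beta$. At that rate $\beta|\dot E|=1$, which is the crossover between the two asymptotic regimes of the arclength monitor.

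The main obstacle is making precise in what sense an inequality rule ``equidistributes'' a monitor. I would handle it as Proposition~\ref{prop:monitors} does: interpret it with equality, which is the step actually taken modulo a safety factor. I would also note that the correspondence with rule (a) is asymptotic rather than exact in the intermediate regime.
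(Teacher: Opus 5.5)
Your proposal is correct and essentially matches the paper's proof, run in the opposite direction. The paper starts from the regularized rule $\dt_n\le\tau_1/\sqrt{\eps^2+|\dot E_n|^2}$, identifies its small-rate saturation $\tau_1/\eps$ with $\Delta t_{\max}$ (so $\eps=1/\beta$), factors out $\eps$, and reads off the equidistribution condition $\dt_n\sqrt{1+\beta^2\dot E_n^2}=\text{const}$, which is your two-limit matching reversed; your explicit arclength computation and the uniform bound $\dt_n|\dot E_n|<\Delta t_{\max}/\beta=\tau_1$ (an analogue of Corollary~\ref{cor:uniform-local} for the arclength rule, valid in all regimes) are small additions the paper does not state.
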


\begin{proof}
Regularize rule (a) as $\dt_n \le \tau_1/\sqrt{\varepsilon^2+|\dot E_n|^2}$ with $\varepsilon>0$. As $|\dot E_n| \to \infty$ this reduces to $\tau_1/|\dot E_n|$, recovering \eqref{eq:adapt1}; as $|\dot E_n| \to 0$ it saturates at $\tau_1/\varepsilon$, which we identify with $\Delta t_{\max}$, so $\varepsilon = \tau_1/\Delta t_{\max} = 1/\beta$. Factoring $\varepsilon$ out of the square root,
\[
\frac{\tau_1}{\sqrt{\varepsilon^2+|\dot E_n|^2}}
= \frac{\tau_1/\varepsilon}{\sqrt{1+|\dot E_n|^2/\varepsilon^2}}
= \frac{\Delta t_{\max}}{\sqrt{1+\beta^2|\dot E_n|^2}},
\]
which is \eqref{eq:scaledAL-prop}. The corresponding equidistribution condition $\dt_n\,\sqrt{1+\beta^2\dot E_n^2} = \text{const}$ distributes time steps uniformly along the arclength of the curve $(t, \beta E(t))$.
\end{proof}

\begin{remark}
Both sides of \eqref{eq:scaledAL-prop} carry units of time, so $\beta\dot E$ is dimensionless. This form also matches the rule used in \cite{Qiao2011,Zhang2012}, where the step is taken as $\Delta t_{\max}$ divided by a square root of the same shape; Proposition~\ref{prop:arclength} identifies their tuning parameter with $\Delta t_{\max}/\tau_1$.
\end{remark}

\subsubsection{Energy--rate--based timestep selection}

Once $u^n$ is available, the newest symmetric-energy difference provides
the discrete rate estimator
\begin{equation}\label{eq:rate-estimator}
D_n
:=
\left|
\frac{\Esym^n-\Esym^{n-1}}{\dt_{n-1}}
\right|.
\end{equation}
This is a computable approximation to the recent energy dissipation rate.

The energy--dissipation linkage motivates the practical adaptive rules
\begin{align}
\dt_n &= \frac{\tau_1}{D_n}, \label{eq:adapt1}\\
\dt_n &= \frac{\tau_2}{\sqrt{D_n}}, \label{eq:adapt2}
\end{align}
where $\tau_1, \tau_2 > 0$ are user--selected tolerances.
In practice one takes
\[
\dt_n = \alpha \min\!\left(
  \frac{\tau_1}{D_n},\;
  \frac{\tau_2}{\sqrt{D_n}}
\right),
\qquad 0 < \alpha < 1,
\]
where $\alpha$ is a safety factor.

Note that $D_n = |\dot{E}(t_n)| + \mathcal{O}(h)$ by Proposition 1. Hence all the same bounds follow with only modified constants.

\section{The Adaptive SSAV--vBDF2 Algorithm}
\label{sec:algorithm}

\subsection{Algorithm}

\begin{algorithm}[H]
\caption{Adaptive SSAV--vBDF2 with energy--rate proposal and symmetric-energy acceptance}
\label{alg:defect-controller}
\begin{algorithmic}[1]
\Require
Solution values $Y^{n-1},Y^{n}$, previous step size $\dt_{n-1}$,
tolerances $\tau_1,\tau_2$, safety factor $\alpha\in(0,1)$,
regularization $\delta>0$, step bounds $\dt_{\min},\dt_{\max}$,
and ratio bounds $\rho_{\min},\rho_{\max}$.

\State \textbf{(Monitor)} Compute the current symmetric energy
\[
\Esym^{n}:=\tfrac12\Big(E(u^{n})+E(2u^{n}-u^{n-1})\Big)
\]
and the most recent discrete energy-rate estimator
\[
D_n:=\left|\frac{\Esym^{n}-\Esym^{n-1}}{\dt_{n-1}}\right|.
\]

\State \textbf{(Propose step)} Set
\[
\dt_{\mathrm{prop}}
:=\alpha\min\!\left(
  \frac{ \tau_1}{\delta+D_n},\;
  \frac{\tau_2}{\sqrt{\delta^2+D_n}}
\right).
\]

\State \textbf{(Enforce bounds)} Set
\[
\dt_n \leftarrow \min(\dt_{\max},\max(\dt_{\min},\dt_{\mathrm{prop}})),
\]
\[
\dt_n \leftarrow \min(\rho_{\max}\dt_{n-1},\max(\rho_{\min}\dt_{n-1},\dt_n)).
\]

\State \textbf{(Attempt step)} Set
\[
\rho_n=\frac{\dt_n}{\dt_{n-1}}
\]
and compute a candidate $Y^{n+1}$ via the SSAV--vBDF2 scheme
\eqref{eq:scheme-u}--\eqref{eq:scheme-w} using step size $\dt_n$.

\State \textbf{(Accept/reject)} Compute
\[
\Esym^{n+1}:=\tfrac12\Big(E(u^{n+1})+E(2u^{n+1}-u^{n})\Big).
\]

\State Set $\dt_n^{\mathrm{floor}} \leftarrow \max\big(\dt_{\min},\;\rho_{\min}\dt_{n-1}\big)$.

\While{$\Esym^{n+1} > \Esym^{n}$ \textbf{and} $\dt_n > \dt_n^{\mathrm{floor}}$}
    \State \textbf{Reject step} and reduce
    \[
    \dt_n \leftarrow \max\big(\dt_n^{\mathrm{floor}},\;\tfrac12\,\dt_n\big).
    \]
    \State Update $\rho_n\leftarrow \dt_n/\dt_{n-1}$.
    \State Recompute the candidate $Y^{n+1}$ with the reduced $\dt_n$.
    \State Recompute $\Esym^{n+1}$.
\EndWhile

\State \textbf{Accept step} and record
\[
\rho_{n}\leftarrow \frac{\dt_n}{\dt_{n-1}}.
\]

\State \Return $Y^{n+1}$, $\dt_n$, $\rho_{n}$.
\end{algorithmic}
\end{algorithm}

The ratio clamp in step 3 is applied against $\dt_{n-1}$, not
$\dt_n$, so that it is a genuine restriction. The rejection loop in steps 7--11 halves down to
$\max(\Delta t_{\min},\rho_{\min}\dt_{n-1})$ rather than to
$\Delta t_{\min}$; without this floor, repeated halving could drive
$\rho_n$ below $\rho_{\min}$ and violate the hypothesis of
Proposition~\ref{prop:global-accuracy}.

\subsection{Energy stability of accepted steps}
\label{sec:stability}

The adaptive controller in Algorithm~\ref{alg:defect-controller} enforces energy monotonicity by construction. We now state this precisely and quantify its implications for the physical energy.

\begin{prop}[Practical monotonicity]
\label{prop:practical-mono}
Algorithm~\ref{alg:defect-controller} accepts step $n$ only if $\Esym^{n+1} \le \Esym^n$, or if $\dt_n$ has reached the floor $\max(\Delta t_{\min},\rho_{\min}\dt_{n-1})$. Therefore, along accepted steps above the floor, the discrete symmetric energy is non-increasing:
\[
\Esym^{n+1} \le \Esym^n.
\]
\end{prop}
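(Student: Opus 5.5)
The plan is to read the result directly off the control flow of Algorithm~\ref{alg:defect-controller}, treating the rejection loop (steps 7--11) as a guarded iteration and checking what holds when it exits. No property of the SSAV--vBDF2 scheme \eqref{eq:scheme-u}--\eqref{eq:scheme-w} is needed beyond two facts. First, for each admissible $\dt_n$ the candidate $Y^{n+1}$ is well defined, since the scheme is linearly implicit. Second, $\Esym^{n+1}$ can then be evaluated from \eqref{eq:practical-sym-energy}.

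The first step is termination of the loop. Set $f:=\dt_n^{\mathrm{floor}}=\max(\dt_{\min},\rho_{\min}\dt_{n-1})$. Each pass replaces $\dt_n$ by $\max(f,\tfrac12\dt_n)$. I will check that $\dt_n\ge f$ holds on entry, which follows from the two clamps in step 3, and that the update preserves $\dt_n\ge f$. While $\dt_n>f$, each pass either sets $\dt_n=f$ or halves it. The number of passes is therefore at most $\lceil\log_2(\dt_n^{(0)}/f)\rceil+1$, which is finite because $f\ge\dt_{\min}>0$. I expect this to be the only point needing care. In particular, one must confirm that the max-clamp against $\rho_{\min}\dt_{n-1}$ in step 3 indeed gives $\dt_n\ge f$ initially, so the loop guard cannot misbehave. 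Otherwise the argument is bookkeeping.

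Next I use the negation of the guard at exit: either $\Esym^{n+1}\le\Esym^n$, or $\dt_n\le f$. Together with the invariant $\dt_n\ge f$, the second alternative means $\dt_n=f$, i.e. the floor has been reached. Here $\Esym^{n+1}$ is the value recomputed in step 11 for the current candidate, or in step 5 if the loop never ran, so it matches the accepted $Y^{n+1}$. This is exactly the first assertion.

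The monotonicity statement then follows. For any accepted step whose $\dt_n$ lies strictly above the floor, the second alternative is excluded, so $\Esym^{n+1}\le\Esym^n$. I would also note a subtlety here: a step accepted at the floor may still satisfy the inequality, but the statement makes no claim about it. Chaining the inequality over consecutive accepted steps gives a non-increasing sequence along any run of such steps.
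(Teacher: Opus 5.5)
Your proposal is correct and takes the same route as the paper, which reads the conclusion directly off the exit condition of the rejection loop; your termination count and explicit negation of the loop guard make that one-line argument precise. The entry invariant $\dt_n\ge\dt_n^{\mathrm{floor}}$ is not needed for the ``above the floor'' conclusion, since the negated guard alone gives $\Esym^{n+1}\le\Esym^n$ whenever $\dt_n>\dt_n^{\mathrm{floor}}$; and if you do keep it, note that step 3 yields $\dt_n\ge\dt_{\min}$ only when $\rho_{\max}\dt_{n-1}\ge\dt_{\min}$ (for instance $\rho_{\max}\ge 1$ together with $\dt_{n-1}\ge\dt_{\min}$ by induction).
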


\begin{proof}
This is immediate from the rejection loop in Algorithm~\ref{alg:defect-controller}: the while loop rejects and halves $\dt_n$ until either $\Esym^{n+1} \le \Esym^n$ or the floor is reached.
\end{proof}

While Proposition~\ref{prop:practical-mono} concerns the symmetric energy, the quantity of physical interest is $E(u^n)$ itself. The acceptance test is a statement about consecutive values of $\Esym$, and because those form a monotone chain it telescopes; combining it with the local bound of Corollary~\ref{cor:uniform-local} gives a bound on the physical energy that does not accumulate with the number of steps.

\begin{prop}[Uniform physical energy bound]
\label{prop:uniform-physical}
Assume $E \in C^3$ with bounded second and third derivatives on the solution trajectory, and suppose every step $n_0 < n \le N$ is accepted above the floor, so that $\Esym^{n}\le\Esym^{n_0}$. Suppose further that the combined rule of Proposition~\ref{prop:monitors}(c) is in force, so that $h_k|\dot E_k| \le \alpha\tau_1$ for all $k$. Then
\begin{equation}
\label{eq:uniform-physical}
E(u^{n}) \;\le\; E(u^{n_0}) \;+\; \alpha\,\tau_1 \;+\; 2C_0\,\Delta t_{\max}^2
\qquad \text{for all } n_0 < n \le N,
\end{equation}
where $C_0$ depends only on bounds on $E''$ and $E'''$. In particular the
bound is uniform in $n$: the tolerance $\tau_1$ caps the total admissible
rise in the physical energy over the whole integration, not merely the
rise per step.
\end{prop}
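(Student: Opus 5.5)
The plan is to sandwich the physical energy between symmetric energies and use the telescoped monotonicity $\Esym^{n}\le\Esym^{n_0}$ in the middle. For each index $k$ write the exact second-order Taylor expansion behind \eqref{eq:Esym-defect-general}:
\[
\Esym^k - E(u^k) = \tfrac12 DE(u^k)[\delta^k] + R_k,\qquad |R_k|\le C_0\|\delta^k\|^2 ,
\]
with $C_0$ depending only on the bound for $E''$. The first-order term is then identified with the dissipation rate, as in Proposition~\ref{prop:Esym-linkage}: $DE(u^k)[\delta^k]=h_{k-1}\dot E_k+O(h_{k-1}^2)$, where the $O(h^2)$ correction comes from $\delta^k=h_{k-1}u_t+O(h_{k-1}^2)$ and uses the $E'''$ bound together with the $C^2$ regularity of the trajectory. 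Absorbing everything into one constant (still called $C_0$) and using $h_{k-1}\le\Delta t_{\max}$ gives the two-sided local estimate
\[
\Big|\Esym^k - E(u^k) - \tfrac12 h_{k-1}\dot E_k\Big| \le C_0\,\Delta t_{\max}^2 .
\]

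Next comes the sign observation, applied with $k=n$ and $k=n_0$. Since $\dot E_k\le0$ for a gradient flow, the leading term $\tfrac12 h_{k-1}\dot E_k$ is nonpositive, and by Corollary~\ref{cor:uniform-local} it lies in $[-\tfrac12\alpha\tau_1,0]$.
\begin{itemize}
\item At the endpoint $n$ this gives the lower bound $\Esym^n\ge E(u^n)-\tfrac12\alpha\tau_1-C_0\Delta t_{\max}^2$.
\item At the starting point $n_0$ it gives the upper bound $\Esym^{n_0}\le E(u^{n_0})+C_0\Delta t_{\max}^2$, since the nonpositive leading term can be dropped.
\end{itemize}
Chaining these with $\Esym^n\le\Esym^{n_0}$ yields
\[
E(u^n)\le E(u^{n_0})+\tfrac12\alpha\tau_1+2C_0\Delta t_{\max}^2 ,
\]
which is stronger than \eqref{eq:uniform-physical}. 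The stated bound with $\alpha\tau_1$ then follows trivially, and the slack can absorb the fact that in practice $D_n=|\dot E_n|+O(h)$ replaces $|\dot E_n|$. If one prefers not to use the sign of $\dot E$, bounding $|\tfrac12 h\dot E|\le\tfrac12\alpha\tau_1$ at both ends gives exactly $\alpha\tau_1$. I will present this sign-free version, since it matches the statement and is robust.

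The main obstacle is the justification that the remainder is $O(\Delta t_{\max}^2)$ at discrete points that are not exact trajectory samples. Proposition~\ref{prop:Esym-linkage} is stated for $u^k=u(t_k)$, whereas here $u^k$ are computed iterates. I will handle this by working directly with the discrete identity \eqref{eq:Esym-defect-general}, whose remainder depends only on $\|\delta^k\|^2$ and the bound on $E''$. This requires $\|\delta^k\|\lesssim h_{k-1}$ and $DE(u^k)[\delta^k]=h_{k-1}\dot E_k+O(h_{k-1}^2)$ along the numerical solution. I will take both as part of the standing smoothness assumption: they follow from the consistency of the scheme, via Proposition~\ref{prop:global-accuracy}'s second-order accuracy, together with the boundedness of $E'''$. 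Uniformity in $n$ is automatic because only the two endpoints enter the argument, so no errors accumulate across the intermediate steps.
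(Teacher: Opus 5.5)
Your argument is correct and is essentially the paper's proof. The paper bounds $|\Esym^k-E(u^k)|\le\tfrac12\alpha\tau_1+C_0\Delta t_{\max}^2$ at $k=n$ and $k=n_0$ via Proposition~\ref{prop:Esym-linkage} and Corollary~\ref{cor:uniform-local}, then chains through $\Esym^n\le\Esym^{n_0}$, which is exactly your sign-free version. Your two extras are valid refinements: using $\dot E_{n_0}\le 0$ to drop the leading term at the starting point sharpens $\alpha\tau_1$ to $\tfrac12\alpha\tau_1$, and you explicitly justify the linkage for computed rather than exact iterates via second-order accuracy, which the paper simply assumes.
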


\begin{proof}
By Proposition~\ref{prop:Esym-linkage} and Corollary~\ref{cor:uniform-local},
$|\Esym^k - E(u^k)| \le \tfrac12\alpha\tau_1 + C_0\Delta t_{\max}^2$ for every $k$.
Hence
\[
E(u^{n}) \le \Esym^{n} + \tfrac12\alpha\tau_1 + C_0\Delta t_{\max}^2
\le \Esym^{n_0} + \tfrac12\alpha\tau_1 + C_0\Delta t_{\max}^2
\le E(u^{n_0}) + \alpha\tau_1 + 2C_0\Delta t_{\max}^2,
\]
using $\Esym^n \le \Esym^{n_0}$ in the middle step.
\end{proof}

\begin{prop}[Per-step physical quasi-monotonicity]
\label{prop:physical-quasi-mono}
Under the assumptions of Proposition~\ref{prop:uniform-physical}, for any accepted step,
\begin{equation}
\label{eq:physical-quasi-mono}
E(u^{n+1}) \;\le\; E(u^n)
\;+\; \tfrac12\,\alpha\tau_1\Big(1-\rho_n^{-1}\Big)_{\!+}
\;+\; C_1\,\dt_n^2,
\end{equation}
where $(\cdot)_+$ denotes the positive part and $C_1$ depends on bounds on $E''$, $E'''$ and on $\rho_{\max}$. In particular:
\begin{itemize}
\item if $\rho_n \le 1$ (non-increasing steps) the middle term vanishes and $E(u^{n+1}) \le E(u^n) + C_1\dt_n^2$;
\item if $|\rho_n - 1| = O(\dt_n)$, the middle term is $O(\dt_n)$ times $\tau_1$ and the bound is $O(\dt_n^2)$ overall;
\item for a general ratio $\rho_n \in (1,\rho_{\max}]$ the middle term is $O(\tau_1)$ and does \emph{not} vanish with $\dt_n$.
\end{itemize}
\end{prop}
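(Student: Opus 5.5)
The plan is to sandwich $E(u^{n+1})$ and $E(u^n)$ between the symmetric energies using the linkage of Proposition~\ref{prop:Esym-linkage}. The key point is that the $O(h)$ terms are kept with their signs rather than bounded in absolute value. Write $\dot E_k\le 0$ for the dissipation rate at $t_k$. Proposition~\ref{prop:Esym-linkage}, with the $C^3$ remainder, gives
\[
E(u^{k}) = \Esym^{k} - \tfrac12\dt_{k-1}\dot E_k + O(\dt_{k-1}^2)
= \Esym^{k} + \tfrac12\dt_{k-1}|\dot E_k| + O(\dt_{k-1}^2).
\]
Apply this at $k=n+1$ and $k=n$. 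Then use the acceptance criterion $\Esym^{n+1}\le\Esym^n$ from Proposition~\ref{prop:practical-mono}. Subtracting the two expansions yields
\[
E(u^{n+1}) - E(u^n) \le \tfrac12\big(\dt_n|\dot E_{n+1}| - \dt_{n-1}|\dot E_n|\big) + O(\dt_n^2+\dt_{n-1}^2).
\]
Since $\dt_{n-1}=\dt_n/\rho_n$ and $\rho_n\ge\rho_{\min}>0$, the remainder is $C_1\dt_n^2$, with $C_1$ depending on the bounds on $E''$, $E'''$ and on the ratio bounds. This is where the dependence on $\rho$ enters the constant.

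Next, replace $|\dot E_{n+1}|$ by $|\dot E_n|$. Smoothness of the trajectory gives $|\dot E_{n+1}|=|\dot E_n|+O(\dt_n)$, so the substitution costs only another $O(\dt_n^2)$, which is absorbed into $C_1$. The middle term becomes
\[
\tfrac12|\dot E_n|\big(\dt_n-\dt_{n-1}\big) = \tfrac12\dt_n|\dot E_n|\big(1-\rho_n^{-1}\big).
\]
If $\rho_n\le1$ this is nonpositive and can be dropped, which gives the first bullet. Otherwise, bound it by $\tfrac12\dt_n|\dot E_n|(1-\rho_n^{-1})_+$. The combined-rule hypothesis $h_k|\dot E_k|\le\alpha\tau_1$ (Corollary~\ref{cor:uniform-local}) then gives at most $\tfrac12\alpha\tau_1(1-\rho_n^{-1})_+$, which is \eqref{eq:physical-quasi-mono}.

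The bullets follow directly. If $|\rho_n-1|=O(\dt_n)$, then $1-\rho_n^{-1}=O(\dt_n)$, so the middle term is $O(\tau_1\dt_n)$. For a fixed ratio $\rho_n>1$, the factor $(1-\rho_n^{-1})$ is bounded away from zero, so the term stays of order $\tau_1$.

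The main obstacle is bookkeeping which step size multiplies which rate. The controller bounds $\dt_n|\dot E_n|$, but the linkage at level $n+1$ produces $\dt_n|\dot E_{n+1}|$. We must also avoid the crude absolute-value bound, since that would give an unsigned $\alpha\tau_1$ and lose the $(1-\rho_n^{-1})_+$ structure. The first line of attack is the Taylor shift of $\dot E$ described above. In the discrete setting one should also note that the controller actually bounds $D_n$ rather than $|\dot E_n|$; the difference is $O(h)$, which again contributes only $O(\dt_n^2)$ after multiplication by $\dt_n$ and is absorbed into $C_1$.
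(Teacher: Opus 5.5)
Your proposal is correct and follows the paper's proof essentially step for step. You expand $\Esym^{n}$ and $\Esym^{n+1}$ via Proposition~\ref{prop:Esym-linkage} keeping the signed $O(h)$ terms, apply the acceptance inequality, shift $\dot E_{n+1}$ to $\dot E_n$ at $O(\dt_n^2)$ cost, factor out $\tfrac12\dt_n|\dot E_n|(1-\rho_n^{-1})$, and close with Corollary~\ref{cor:uniform-local}. You are also slightly more careful than the paper on one point: converting the $O(\dt_{n-1}^2)$ remainder into $C_1\dt_n^2$ uses $\dt_{n-1}=\dt_n/\rho_n\le\dt_n/\rho_{\min}$, so $C_1$ depends on $\rho_{\min}$, whereas the paper attributes this dependence to $\rho_{\max}$.
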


\begin{proof}
By Proposition~\ref{prop:Esym-linkage},
\[
\Esym^n = E(u^n) + \tfrac12 \dt_{n-1}\,\dot E_n + R_n,
\qquad
\Esym^{n+1} = E(u^{n+1}) + \tfrac12 \dt_n\,\dot E_{n+1} + R_{n+1},
\]
with $|R_k| \le C_0 h_k^2$. The acceptance condition $\Esym^{n+1} \le \Esym^n$ gives
\[
E(u^{n+1}) - E(u^n)
\;\le\; \tfrac12\big(\dt_{n-1}\dot E_n - \dt_n \dot E_{n+1}\big) + R_n - R_{n+1}.
\]
Along a smooth trajectory $\dot E_{n+1} = \dot E_n + O(\dt_n)$, and $\dt_{n-1} = \dt_n/\rho_n$, so
\[
\tfrac12\big(\dt_{n-1}\dot E_n - \dt_n \dot E_{n+1}\big)
= \tfrac12 \dt_n \dot E_n\big(\rho_n^{-1}-1\big) + O(\dt_n^2)
= \tfrac12 \dt_n |\dot E_n|\big(1-\rho_n^{-1}\big) + O(\dt_n^2),
\]
where the last equality uses $\dot E_n \le 0$. This is non-positive when $\rho_n \le 1$; otherwise it is bounded by $\tfrac12\dt_n|\dot E_n|\,(1-\rho_n^{-1}) \le \tfrac12\alpha\tau_1(1-\rho_n^{-1})$ by Corollary~\ref{cor:uniform-local}. Absorbing $R_n - R_{n+1}$ and the $O(\dt_n^2)$ remainder into $C_1\dt_n^2$ gives \eqref{eq:physical-quasi-mono}.
\end{proof}

\begin{remark}
Proposition~\ref{prop:physical-quasi-mono} makes explicit that the
per-step deviation from physical energy monotonicity is a due to changes in
step-ratio. It is not a step-\emph{size} effect as it is
proportional to $(1-\rho_n^{-1})_+$ and survives the limit
$\dt_n \to 0$ at fixed $\rho_n > 1$. This is the same phenomenon
that the variable-step $G$-stability defect quantifies. It also explains
why Proposition~\ref{prop:uniform-physical} is the more useful statement
for practice. The individual $O(\tau_1)$ rises cannot accumulate, because
the underlying behaviour of $\Esym$ is monotone.
\end{remark}

\subsection{Convergence}
\label{sec:convergence}

\begin{prop}[Global accuracy]
\label{prop:global-accuracy}
Let $u(t)$ be a sufficiently smooth solution of the gradient flow \eqref{eq:gradient_flow} on $[0,T]$, and let $\{u^n\}$ be the SSAV--vBDF2 approximation produced by Algorithm~\ref{alg:defect-controller}. Suppose the step ratios satisfy $\rho_n \in [\rho_{\min}, \rho_{\max}] \subset (0, 1+\sqrt{2})$ and the step sizes satisfy $\dt_n \in [\Delta t_{\min}, \Delta t_{\max}]$. Then there exists $C>0$, depending on $T$, the smoothness of $u$, and the ratio bounds, such that
\[
\|u^n - u(t_n)\|_H \le C\,\Delta t_{\max}^2 \quad \text{for all } t_n \le T.
\]
\end{prop}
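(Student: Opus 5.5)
The plan is a standard error-energy argument for the SSAV--vBDF2 scheme. It combines a variable-step BDF2 $G$-stability (discrete orthogonal convolution) estimate with consistency of the SAV and stabilization terms. We view the adaptive mesh as an arbitrary mesh satisfying the stated ratio and step bounds. This is legitimate because Algorithm~\ref{alg:defect-controller} enforces $\rho_n\in[\rho_{\min},\rho_{\max}]$ through its clamp and rejection floor. The controller, and Propositions~\ref{prop:practical-mono}--\ref{prop:physical-quasi-mono}, then play no role beyond guaranteeing these mesh properties.

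\textbf{Error equations.} Set $e_u^n=u^n-u(t_n)$ and $e_w^n=w^n-w(t_n)$, with $w(t)=\sqrt{\Enl{u(t)}+C}$. Subtracting the exact SAV system \eqref{eq:sav_r}, evaluated at $t_{n+1}$, from \eqref{eq:scheme-u}--\eqref{eq:scheme-w} gives error equations with the following truncation terms:
\begin{itemize}
\item the vBDF2 truncation error, which is $O(\dt_n^2)$ with constants depending on $\rho_{\max}$;
\item the extrapolation error $\bar u(t_{n+1})-u(t_{n+1})=O(\dt_n^2)$;
\item the stabilization term $S(u(t_{n+1})-\bar u(t_{n+1}))=O(\dt_n^2)$;
\item the factor $q$ evaluated at the exact solution, which differs from $1$ by $O(\dt_n^2)$.
\end{itemize}
The nonlinear differences $G(\bar u^{n+1})-G(\bar u(t_{n+1}))$ and $1/\sqrt{\Enl{\bar u^{n+1}}+C}-1/\sqrt{\Enl{\bar u(t_{n+1})}+C}$ are Lipschitz in $e_u^n$ and $e_u^{n-1}$. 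This uses the bounds $\|F''\|,\|F'''\|\le C_F$ on the solution range assumed in Section~2, together with $\Enl{\cdot}+C\ge c_0>0$.

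\textbf{Stability estimate.} Test the $u$-error equation with $(-\Delta)^{-1}$ applied to the BDF2 difference of the error, or alternatively with $\tilde\mu$-type errors in the SAV fashion. Test the $w$-error equation with $2e_w^{n+1}$, so that the cross term $\langle G,\delta_t u\rangle$ cancels as in the continuous SAV energy identity. For the time-derivative term, use the variable-step BDF2 positivity result valid for $0<\rho_n<1+\sqrt2$: there is a quadratic form $\mathcal G^{n+1}$ with
\[
\langle d_n^{(0)}e^{n+1}+d_n^{(1)}e^n+d_n^{(2)}e^{n-1},e^{n+1}\rangle\ge \mathcal G^{n+1}-\mathcal G^n-\text{(ratio defect)}.
\]
Here $\mathcal G^n$ is equivalent to $\|e^n\|^2+\|e^{n-1}\|^2$. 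The defect is controlled through the DOC kernels of Liao et al.; with ratio bound $1+\sqrt2$ the defect is summable. The $\Lop$ and $S$ terms are treated as positive coercive parts, using that $\Lop$ is bounded below, after shifting by a multiple of the identity that is absorbed into the Grönwall terms.

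\textbf{Closing the argument.} Sum over $n$ and absorb the nonlinear Lipschitz terms by Young's inequality. Then apply a discrete Grönwall inequality with variable steps, $\sum_n\dt_n\le T$, to get $\|e_u^n\|+|e_w^n|\le C(\sum_k\dt_k\,\dt_k^4)^{1/2}\le C\Delta t_{\max}^2$. The starting step (BDF1 or a small first step) must contribute at most $O(\Delta t_{\max}^2)$ error.

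\textbf{Main obstacle.} The hardest step is the variable-step BDF2 stability under the full ratio range $(0,1+\sqrt2)$. The naive $G$-norm telescoping fails once $\rho_n$ varies. I would first try the DOC-kernel reformulation, which converts the BDF2 operator into a convolution with positive-definite kernels valid exactly for $\rho<1+\sqrt2$, and cite its energy inequality. If that proves awkward in the $H^{-1}$ setting, I would fall back to the restriction $\rho_{\max}<(3+\sqrt{17})/2$-type Grigorieff bounds with a summed ratio-variation term.

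A secondary difficulty is justifying the a priori $L^\infty$ bound on $u^n$ needed for the Lipschitz constants of $G$. I would handle it by a bootstrap: an induction hypothesis $\|e_u^k\|\le \Delta t_{\max}$, combined with an inverse inequality or the paper's convention that the derivative bounds on $F$ hold on the computed range.
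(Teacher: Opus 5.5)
\paragraph{Comparison with the paper's proof.} Your route differs from the paper's in an essential way. The paper does no energy estimate. It treats SSAV--vBDF2 as a consistent variable-step multistep method and cites the classical fact that variable-step BDF2 is zero-stable exactly when the parasitic root $\rho_n^2/(1+2\rho_n)$ stays below $1$, i.e.\ $\rho_n<1+\sqrt2$ (Grigorieff). The SAV coupling then enters only as an extra consistency term. That argument is non-stiff in character: its constant depends on the Lipschitz constant of the discretized right-hand side. An energy argument like yours is what one would need to do better. Your consistency bookkeeping is fine; at the exact solution, $q=1+O(\dt_n^2)$ is sharper than the paper's $1+O(\dt_n)$. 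However, the stability step carries your whole proof, and it rests on a claim that is false as stated. This is a genuine gap.

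\paragraph{Why the stability step fails.} To cancel the SAV cross terms you must pair $e_\mu^{n+1}$ with $\delta_t e_u^{n+1}$ and the $w$-equation with $2e_w^{n+1}$. The form you then need to bound below is the value-tested $\langle d_n^{(0)}e^{n+1}+d_n^{(1)}e^n+d_n^{(2)}e^{n-1},e^{n+1}\rangle$. It appears in the $\Lop$-inner product for $e_u$, and as a scalar for $e_w$, which has no dissipation of its own to help.

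For a frozen ratio $\rho>1$ this form cannot telescope in any $G$-norm. Indeed,
$d^{(0)}+d^{(1)}\cos\theta+d^{(2)}\cos 2\theta=-\frac{(\rho-1)(3\rho+1)}{2(1+\rho)}\theta^2+O(\theta^4)<0$ for small $\theta\neq0$. Taking $e^k=\cos k\theta$, the summed form over a run of equal ratios $\rho>1$ is therefore unbounded below. Your ``ratio defect'' is thus unavoidable: it is a quadratic expression in $e^{n+1},e^n,e^{n-1}$ with no factor of $\dt_n$, and its summability does not follow from $\rho_{\max}<1+\sqrt2$. Classical $G$-norm analyses obtain it only for ratios below roughly $1.9$, together with a bound on the variation of $\{\rho_n\}$, which Algorithm~\ref{alg:defect-controller} does not enforce.

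The DOC-kernel theory removes the variation assumption, but it does not rescue your argument as written:
\begin{itemize}
\item Its threshold is $(3+\sqrt{17})/2$. You have swapped the two bounds: $1+\sqrt2$ is Grigorieff's zero-stability bound, and your ``fallback'' is weaker than the hypothesis rather than a restriction.
\item It works with the DOC-convolved equation. Convolution destroys the step-by-step pairing of the $u$- and $w$-equations on which the SAV cancellation rests, so the result cannot simply be cited here.
\end{itemize}

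The stabilization term is not coercive either. With $x=e^{n+1}-e^n$ and $y=e^n-e^{n-1}$,
$S\langle e^{n+1}-\bar e^{n+1},\delta_t e^{n+1}\rangle=\frac{S(1+2\rho_n)}{\dt_n(1+\rho_n)}\langle x-\rho_n y,\,x-\frac{\rho_n^2}{1+2\rho_n}y\rangle$,
which is indefinite and carries a factor $1/\dt_n$, so Gr\"onwall cannot absorb it.

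You therefore have two options. You can fall back to the paper's zero-stability-plus-consistency argument, accepting its non-stiff constants. Otherwise you must supply an actual variable-step stability lemma for the coupled $(e_u,e_w)$ system, and your proposal does not contain one.
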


\begin{proof}
The variable-step BDF2 formula \eqref{eq:vBDF2} with coefficients \eqref{eq:BDF-coeffs} has local truncation error $O(\dt_n^3)$ per step for any step ratio $\rho_n > 0$. Convergence of variable-step BDF2 methods under bounded step-ratio constraints is established in \cite{hairer1996solving,IserlesNorsett1991}; the requirement $\rho_n < 1+\sqrt{2}$ ensures zero-stability of the method. Algorithm~\ref{alg:defect-controller} enforces $\rho_n \in [\rho_{\min},\rho_{\max}]$ for every accepted step, so the ratio bounds hold by construction. The result then follows from standard convergence theory for zero-stable, consistent linear multistep methods on variable grids.

The SSAV reformulation introduces an additional consistency error of order $O(\dt_n\,|q^n - 1|^2)$ from the coupling between $u$ and $w$. Along smooth solutions, $q^n = w^n/\sqrt{\Enl{\bar u^{n}}+C} = 1 + O(\dt_n)$, so this contributes only $O(\dt_n^3)$ per step and does not affect the global $O(\Delta t_{\max}^2)$ bound.
\end{proof}

\begin{remark}
Sharper convergence results for variable-step BDF2 that track the dependence on the step-ratio sequence are available. Those results show that the global error constant grows at most polynomially with $T$ under bounded ratios, and that tighter ratio bounds yield smaller constants. For the purposes of this paper, the classical convergence result suffices: the adaptive controller produces a step sequence with bounded ratios, and the scheme is second-order accurate on that sequence.
\end{remark}

\subsection{Time-stepper comparison}
\label{sec:stepper-comparison}

The time steppers $\dt_1$ and $\dt_2$ are given by
\begin{equation*}
    \dt_1 = \frac{ \tau_1}{\delta+D_n} \qquad \dt_2 = \frac{\tau_2}{\sqrt{\delta^2+D_n}}
\end{equation*}
and the combined time stepper uses $\dt_1$ and $\dt_2$, as outlined in Algorithm \ref{alg:defect-controller}. 
We compared the combined time stepper with $\dt_1$, $\dt_2$ and the time steppers from \cite{christlieb2024FCH} and \cite{Qiao2011}. The scaled arc length controller from \cite{Qiao2011} uses the time stepper given by (\ref{eq:scaledAL-prop}). The time stepper AM3 is the adaptive time stepping scheme from \cite{christlieb2024FCH} based on the local truncation error. They used a solution computed with the third-order Adams-Moulton (AM3) scheme to control the error. The comparison is done for a Cahn--Hilliard run where the initial condition is a large collection of cosines on $\Omega\in[-\pi,\pi]$. The initial condition was adjusted so that $u\in[-0.5,0.5]$ and $m=0$. The time stepping methods were compared by observing the error in the energy. To compute the error in the energy we used a run with a constant time step of $\dt=10^{-5}$. The error in Figure \ref{fig:CH_comp_E_err} is computed by 
\begin{equation*}
    \int_0^{50}\frac{|E-E_e|}{1+|E|}\,dt
\end{equation*}
where $E_e$ is the energy from the run with constant time step.
Each time stepper adjusts the adaptivity differently, so the error is plotted against the number of FFT. For this example, $\dt_2$ and the combined stepper do the best followed by $\dt_1$ and AM3. We can also see that $\dt_1$ and the scaled arclength time stepper are very similar which is expected according to Proposition \ref{prop:arclength}. Figure \ref{fig:CH_comp_E_err}, shows that $\dt_1$ doesn't do as well as the other time steppers and the reason is because it doesn't capture the small dip in the energy between $t=31$ and 32 that is shown in Figure \ref{fig:CH_comp_E}. Figure \ref{fig:dt1_dt2_comp} shows $\max_n|\Esym^n - E(u^n)|$ for $\dt_1$, $\dt_2$ and the combined monitor. In this plot, $\dt_1$ is optimal which is consistent with Proposition \ref{prop:equidist-optimal}.

\begin{figure}
    \centering
    \begin{subfigure}[t]{0.8\textwidth}
    \includegraphics[width = \textwidth]{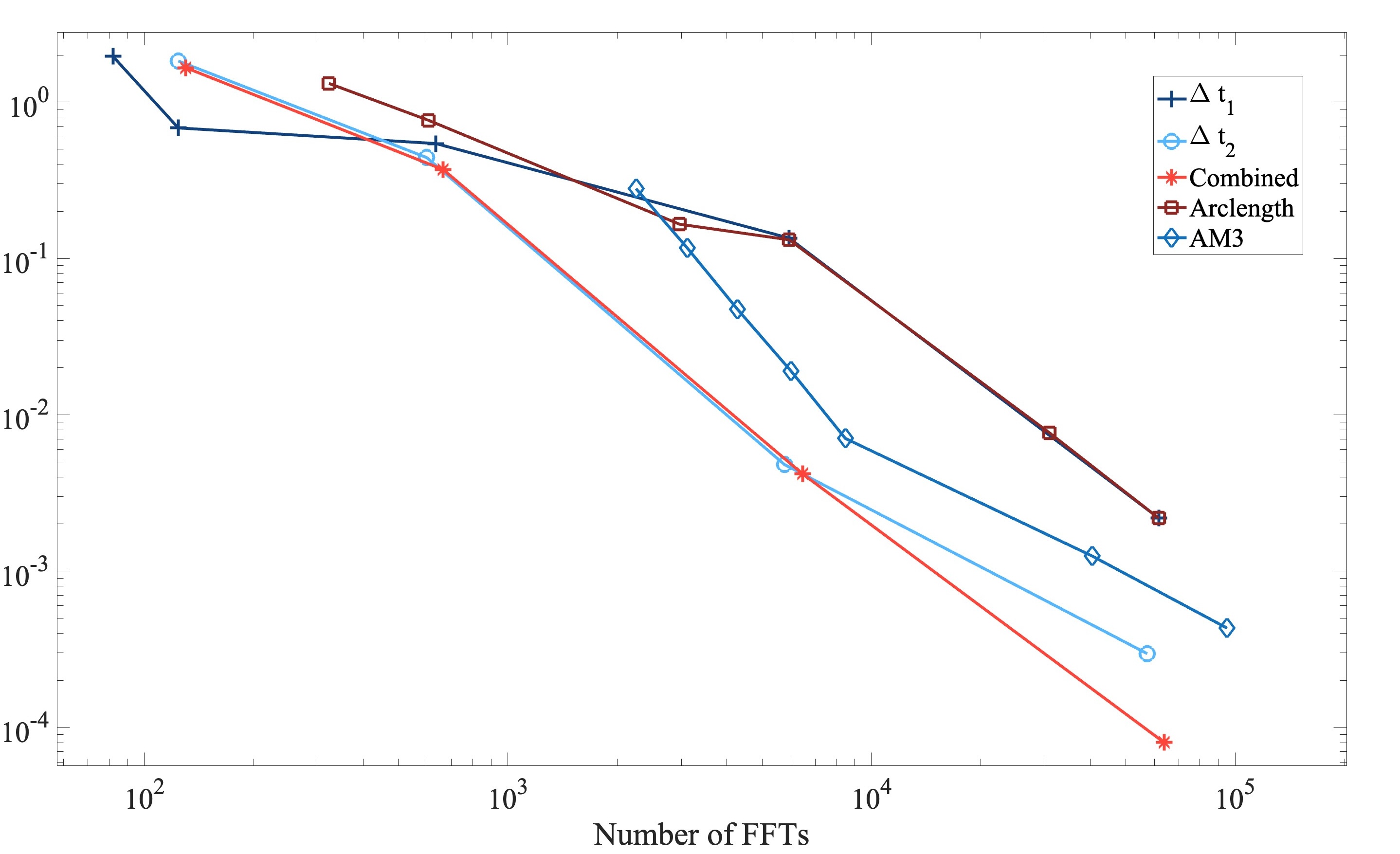}
        \caption{Energy in the error plotted against the number of FFTs. The error in $E(u)$ is computed at $t=50$ against a run with a small constant time step of $1\mathrm{e}{-5}$.}
        \label{fig:CH_comp_E_err}
    \end{subfigure}

    \begin{subfigure}[t]{0.49\textwidth}
    \includegraphics[width = \textwidth]{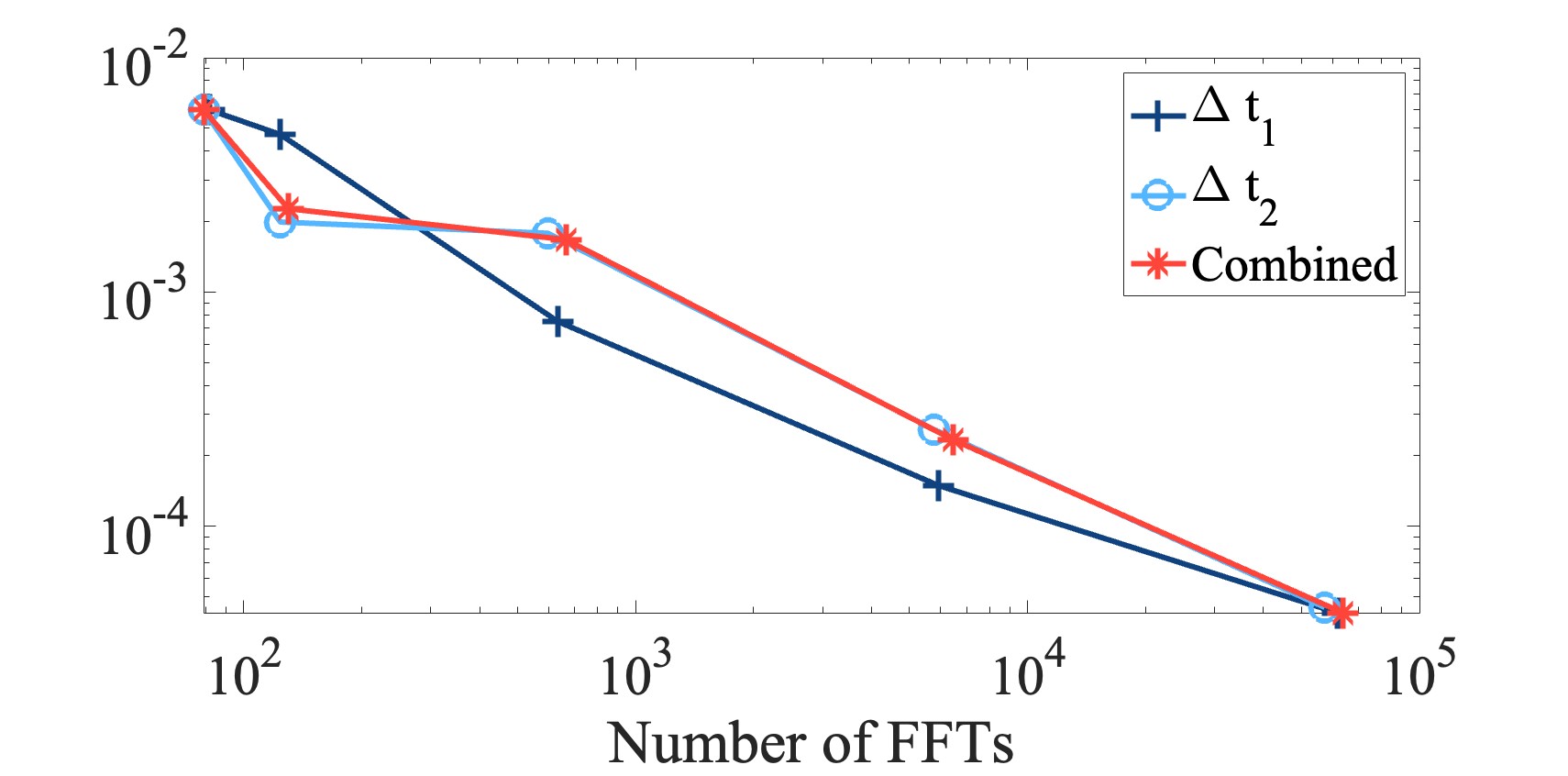}
    \caption{Plot of $\max_n|\Esym^n - E(u^n)|$ for the monitors $\dt_1$, $\dt_2$ and both combined.}
    \label{fig:dt1_dt2_comp}
    \end{subfigure}
    \begin{subfigure}[t]{0.49\textwidth}
        \includegraphics[width=\textwidth]{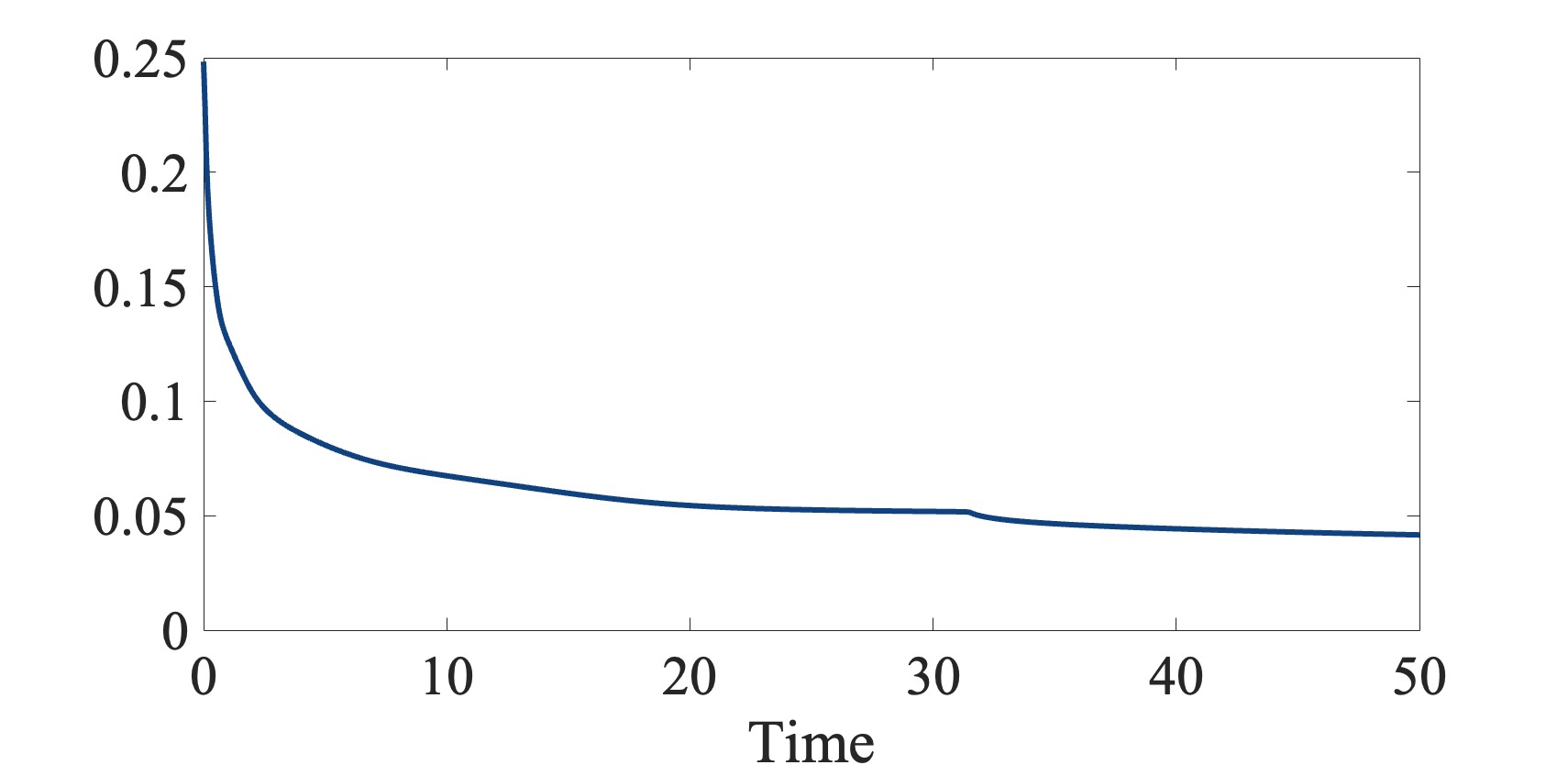}
        \caption{Energy}
        \label{fig:CH_comp_E}
    \end{subfigure}
        \caption{A comparison of the time steppers for a Cahn--Hilliard run.}
\label{fig:CH_comp}
\end{figure}

\section{Numerical Experiments}
\label{sec:examples}

\subsection{Implementation details}

Our full time-step selection algorithm is given in Algorithm \ref{alg:defect-controller}. We define the minimum time step $\dt_{\min}$ and the time stepper enforces a maximum time step of $\dt_{\max}=\tau/\delta$, so the physical step sizes satisfy $\dt_n\in[\dt_{\min},\dt_{\max}]$.
Additionally, a minimum and maximum time step ratio, $\rho_{\min},\rho_{\max}$ are defined to avoid drastically changing the time step. For simplicity, we set the tolerances equal to each other, $\tau=\tau_1=\tau_2$; note that with this choice the crossover of Proposition~\ref{prop:monitors}(c) occurs at $|\dot E|=1$. Given initial values $t_0$ and $Y^0$, we set $\dt_0=\dt_{\min}$ and compute the solution values at $t=t_1$ using BDF1. In the examples below, the time stepper usually doesn't reach the maximum time step and the minimum only during the initial transient phase when starting with random initial data. The parameter values, domain, minimum time step and tolerance used for each example is given in Table \ref{table:model_setup}. For the spatial discretization we use Fourier pseudo-spectral methods with $128^2$ or $256^2$ modes for the 2D runs and $64^3$ for 3D. Additionally, we fix the SAV parameter $C=1$ for all the models except FCH, where for FCH $C=256^2$ which was taken from \cite{ZhangC2020FCH}. For all the models except FCH the stabilization parameter comes from \cite{Zhang2020}. For FCH, the authors in \cite{ZhangC2020FCH} used $S=3\varepsilon^2$ but this choice wasn't adequent and through experimentation we found $S=10$ was a reasonable choice.
The models we consider are subject to the following mass constraint
\begin{equation}
    m=\frac{1}{|\Omega|}\int_\Omega u  \,dx. \nonumber
\end{equation}
In three of the examples below, we start from random initial data, so the initial mass can differ slightly from $m$. We adjust the initial condition to ensure it has the correct mean. For the examples below this mass constraint was fulfilled.

\begin{table}[ht]
\centering
\begin{tabular}{c c c c c c}
 \hline
  & Parameters & $\Omega$ & Resolution &$\dt_{\min}$ & $\tau$\\
 \hline
 && &&\\
 CH & $\begin{array}{c}
      \eps=0.06 \\
      S=2
 \end{array}$  & $[-\pi,\pi]^2$ & $128^2$ & $10^{-5}$ & $10^{-5}$\\
 \hline
 && &&\\
 OK & $\begin{array}{c}
 \gamma=3.5\\
 S=2
 \end{array}$ & $[0,4\pi]^2$ & $128^2$ &$10^{-5}$ & $10^{-5}$\\
 \hline
 && &&\\
 PFC & $\begin{array}{c}
      \beta=0.2 \\
    m=0.25\\
    S=2
 \end{array}$ &  $[0,8\pi]^3$ & $64^3$ &$10^{-2}$ & $10^{-3}$\\
 \hline
 && &&\\
 FCH & $\begin{array}{c}
      \eps=0.1 \\
    \eta=\eps^2 \\
    S=10
 \end{array}$ & $[0,4\pi]^2$ & $256^2$ &$10^{-5}$ & $10^{-5}$\\
\end{tabular}
\caption{Set up for each example}
\label{table:model_setup}
\end{table}

\subsection{Convergence study with exact solution}

To show that the adaptive time stepper is second order, we conduct a convergence study for the 2D Cahn--Hilliard model.
We will use the following initial condition
\begin{equation*}
    u_0=0.2(\text{rand}(N_x,N_y)-1/2)
\end{equation*}
on the domain $\Omega=[-\pi,\pi]^2$ with $64^2$ Fourier modes where $\text{rand}(N_x, N_y)$ is a random number drawn from the uniform distribution in $(0,1)$. 
We vary $\tau$ and set $\dt_{\min}=\tau/100$.
The error is computed against a run with $\tau = 1.2207\mathrm{e}-06$. The error in $u$ is computed by
\begin{equation*}
    e_u = \sqrt{\Delta_x \Delta_y\sum_{i=1}^{N_x}\sum_{j=1}^{N_y}|u-u_E|^2}
\end{equation*}
We conducted 5 runs with different seeds and took the average of the errors across those runs. The error in $u$ and $\max|\Esym-E|$ are presented in Table \ref{table:CH_exact_err} and a plot of the errors is shown in Figure \ref{fig:conv_study}. From the plot and table we can see second order convergence for $u$ and first order convergence for $\max|\Esym-E|$.

\begin{figure}
    \centering
    \includegraphics[width=0.7\linewidth]{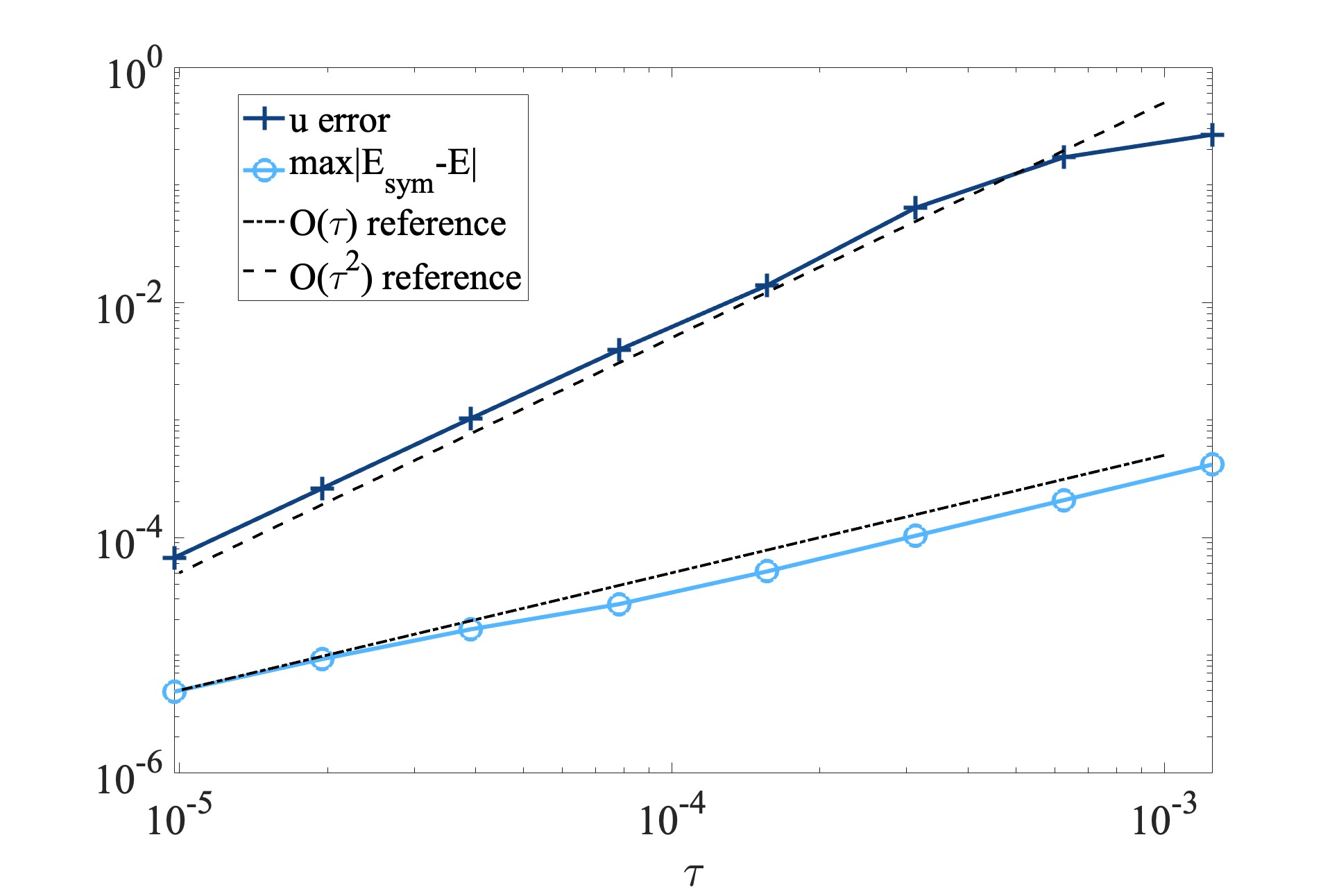}
    \caption{Plot of the error in $u$ and $\max|\Esym-E|$ at $t=1$.}
    \label{fig:conv_study}
\end{figure}

\begin{table}[ht]
\centering
\begin{tabular}{|c | c c |c c |c|}
 \hline
 $\tau$ & Error in $u$ & Rate & $\max|\Esym - E(u)|$ & Rate & Rejected steps\\
 \hline
 $1.250\mathrm{e}{-03}$ & $2.6694\mathrm{e}{-01}$ & $-$ & $4.1668\mathrm{e}{-04}$ & - & 4.4 \\
 $6.2500\mathrm{e}{-04}$ & $1.7128\mathrm{e}{-01}$ & $0.6402$ & $2.0752\mathrm{e}{-04}$ & 1.0057 & 6.0\\
 $3.125\mathrm{e}{-04}$ & $6.3639\mathrm{e}{-02}$ & $1.4283$ & $1.0332\mathrm{e}{-04}$ & 1.0062 & 4.0\\
 $1.5625\mathrm{e}{-04}$ & $1.3963\mathrm{e}{-02}$ & $2.1883$ & $5.1431\mathrm{e}{-05}$ & 1.0062 & 3.6\\
 $7.8125\mathrm{e}{-05}$ & $3.9387\mathrm{e}{-03}$ & $1.8259$ & $2.7076\mathrm{e}{-05}$ & 0.9258 & 3.8\\
 $3.9063\mathrm{e}{-05}$ & $1.0264\mathrm{e}{-03}$ & $1.9401$ & $1.6537\mathrm{e}{-05}$ & 0.7113 & 2.0\\
 $1.9531\mathrm{e}{-05}$ & $2.6229\mathrm{e}{-04}$& $1.9684$ & $9.2386\mathrm{e}{-06}$ & 0.8399 & 3.6\\
 $9.7656\mathrm{e}{-06}$ & $6.7136\mathrm{e}{-05}$ & $1.9660$ & $4.8777\mathrm{e}{-06}$ & 0.9215 & 3.6\\
 \hline
\end{tabular}
\caption{Errors at $t=1$ along with the rate of convergence.}
\label{table:CH_exact_err}
\end{table}

\subsection{Cahn--Hilliard spot collapse}
\label{sec:CH}

The Cahn--Hilliard model has the following free energy and PDE
\begin{equation}
    E(u) = \frac{1}{|\Omega|}\int_\Omega\left(\frac{\varepsilon^2}{2}|\nabla u|^2+\frac{1}{4}(1-u^2)^2\right)\nonumber \text{.}
\end{equation}
\begin{equation}
    u_t = \Delta(-\varepsilon^2\Delta u + u^3-u)\nonumber
\end{equation}
For the Cahn-Hilliard model, we will look at the spot collapse example from \cite{Zhang2020}. The initial condition is two circles with different radii given by the following equation
\begin{equation}
\label{eq:CH-IC}
    u_0=-\left[\tanh\left(\frac{\sqrt{(x+0.8)^2+y^2}-1.4}{1.5\varepsilon}\right) + \tanh\left(\frac{\sqrt{(x-1.7)^2+y^2}-0.5}{1.5\varepsilon}\right)\right]+1
\end{equation}

The two tanh factors each transition over a width proportional to $\varepsilon$. The shift and rescaling in \eqref{eq:CH-IC} are chosen so that $u \approx 1$ inside either circle and $u \approx -1$ everywhere else. The energy, time step and profiles of $u$ are plotted in Figure \ref{fig:CH_spot_collapse}. In Figure \ref{fig: CH_spot_collapse_dt}, we see that the time step follows the behaviour of the energy. At around $t=19$, there is a rapid decrease in the energy so the time step decreases. As the energy flattens out after $t=20$, there is an increase in the time step.

\begin{figure}
\centering
    \begin{subfigure}{0.49\textwidth}
        \includegraphics[width=\textwidth]{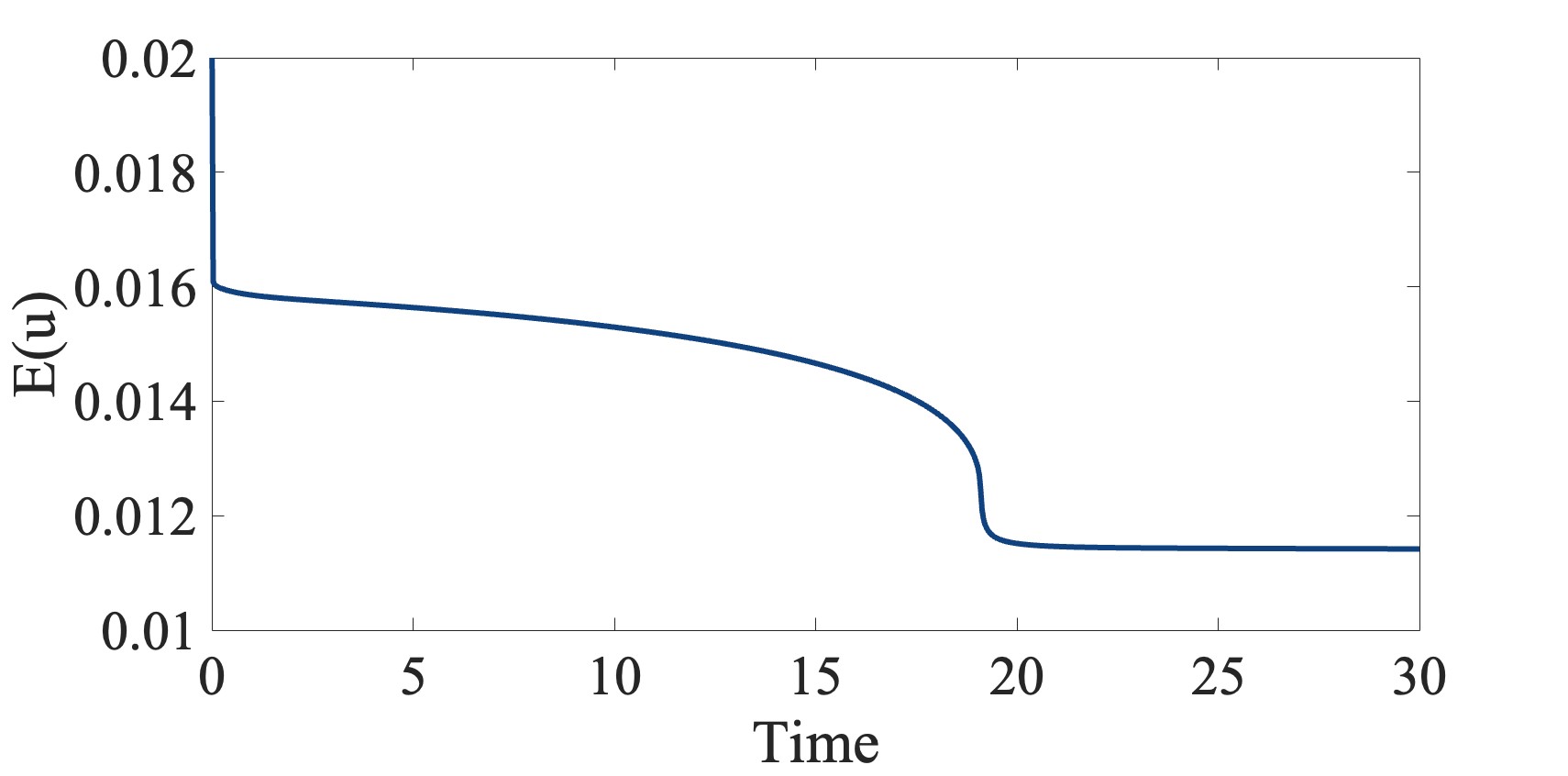}
        \caption{Energy}
        \label{fig: CH_spot_collapse_E}
    \end{subfigure}
    \begin{subfigure}{0.49\textwidth}
        \includegraphics[width=\textwidth]{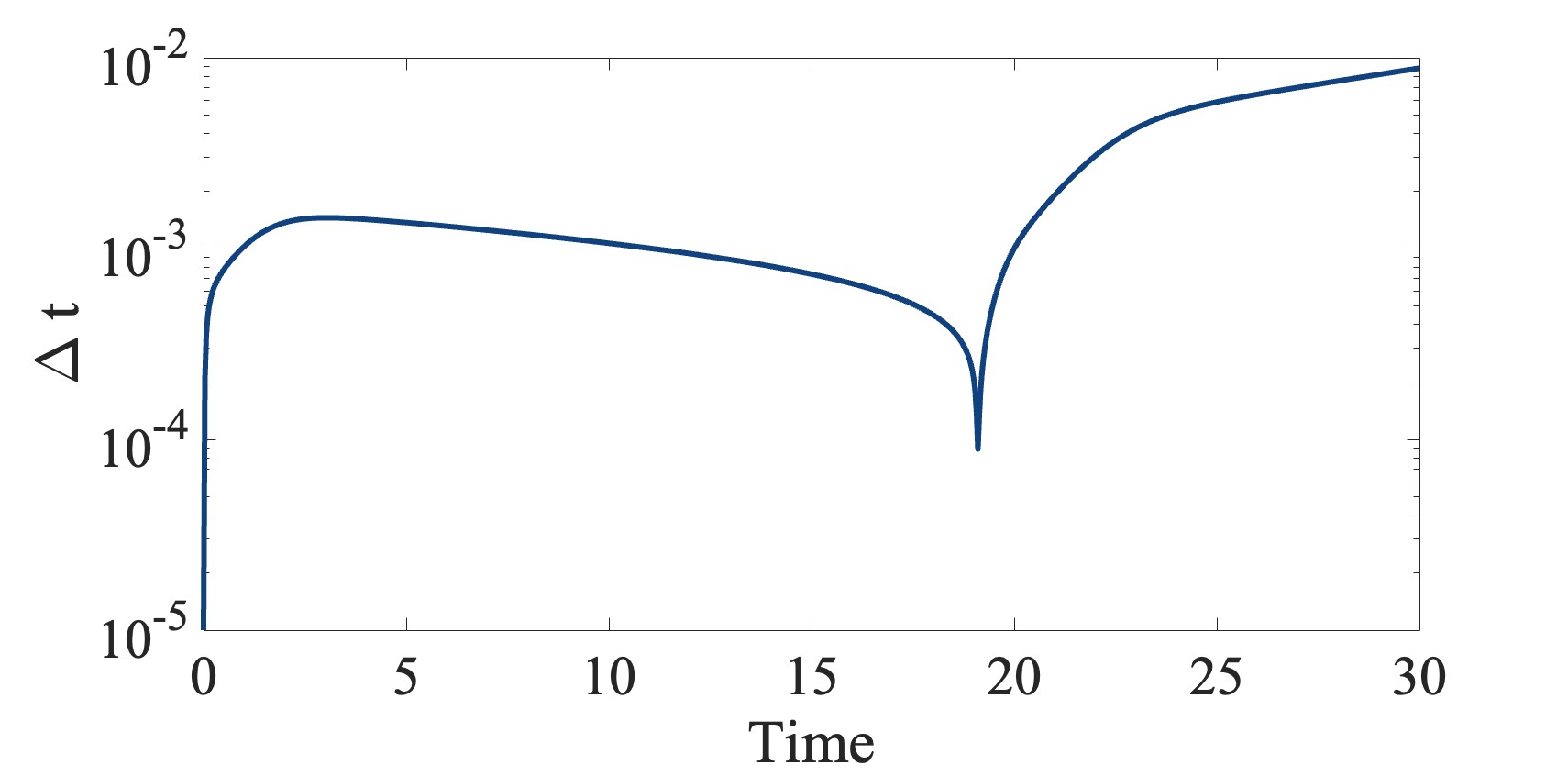}
        \caption{Time step}
        \label{fig: CH_spot_collapse_dt}
    \end{subfigure}

    \begin{subfigure}{0.99\textwidth}
        \includegraphics[width=\textwidth]{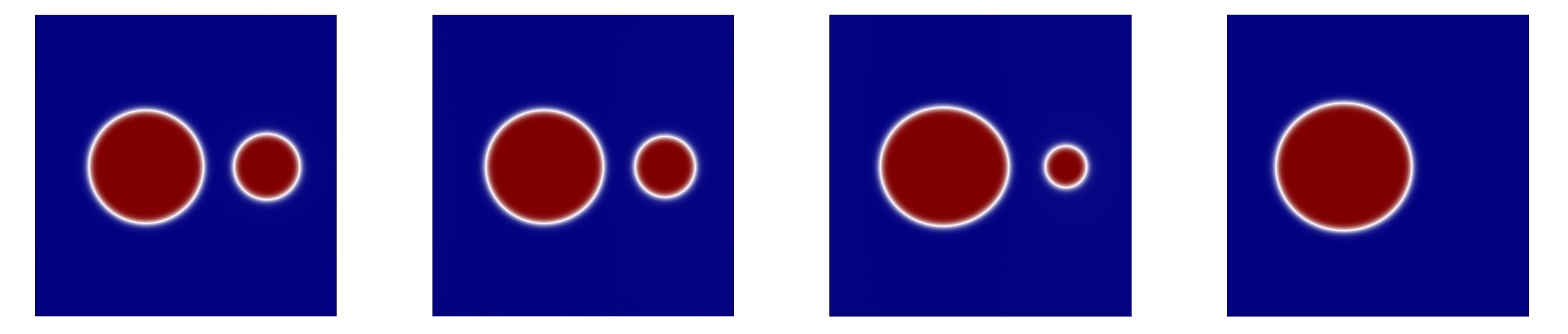}
        \caption{Profiles of $u$ at times $t=1$, 5, 15 and 30.}
        \label{fig: CH_spot_collapse_u}
    \end{subfigure}
    \caption{Plots of energy, time step, and profiles of $u$ for the 2D Cahn-Hilliard that models the spot collapse.}
    \label{fig:CH_spot_collapse}
\end{figure}

\subsection{Ohta--Kawasaki mixed states}

The next model we will look at is the Ohta--Kawasaki model, which is the nonlocal extension of the Cahn--Hilliard model. The free energy and PDE are given by
\begin{equation}
    E(u) = \frac{1}{|\Omega|}\int_\Omega \left(\frac{1}{2\gamma^2}|\nabla u|^2 + \frac{1}{4}(1-u^2)^2+\frac{1}{2}|\nabla v|^2 \right)\,\mathrm{d} x\nonumber
\end{equation}
\begin{equation}
    u_t = \Delta\left(-\frac{1}{\gamma^2}\Delta u + u^3-u +  v\right)\nonumber
\end{equation}
where $v$ is given by the boundary value problem $-\Delta v = u-m$ with periodic boundary conditions on $\partial\Omega$.
For the Ohta-Kawasaki run, we use random initial condition
\begin{equation*}
    u_0=m+0.002(\text{rand}(N_x,N_y)-1/2)
\end{equation*}
where $\text{rand}(N_x, N_y)$ is a random number drawn from the uniform distribution in $(0,1)$.
Results are shown in Figure \ref{fig:OK_mixed_state} for various $m$ values. In Figure \ref{fig:OK_mixed_E}, for all three runs, the energy starts out flat so there is an increase in the time step. The time step decreases when the energy drops and increases again when the energy flattens out. For these runs, there were some early rejected steps due to using random initial data. The number of rejected steps for each run is given in Table \ref{table:rejected_steps}. The rejected steps occurred at the beginning of the run and for the 3 runs we have a negligible rejection rate.

\begin{table}[ht]
\centering
  \begin{tabular}{|c |c |c |c |c|} 
  \hline
   \multicolumn{2}{|c|}{Model} & Attempted steps & Rejected steps  & Rejection rate \\
  \hline
  \multirow{3}{*}{OK} & $m=0.05$ & 92901& 9& 0.0097\% \\
  & $m=0.15$ & 65981& 11& 0.0167\%\\
  & $m=0.25$ &55375 & 3& 0.0054\%\\
  \hline
  \multicolumn{2}{|c|}{PFC} & 1049 & 250 & 23.8322\%\\
  \hline
  \end{tabular}
  \caption{Number of rejected steps and the rejection rate for each model. The Cahn--Hilliard and functionalized Cahn--Hilliard run didn't have any rejected steps.}
  \label{table:rejected_steps}
  \end{table}

\begin{figure}
\centering
    \begin{subfigure}{0.99\textwidth}
        \includegraphics[width=\textwidth]{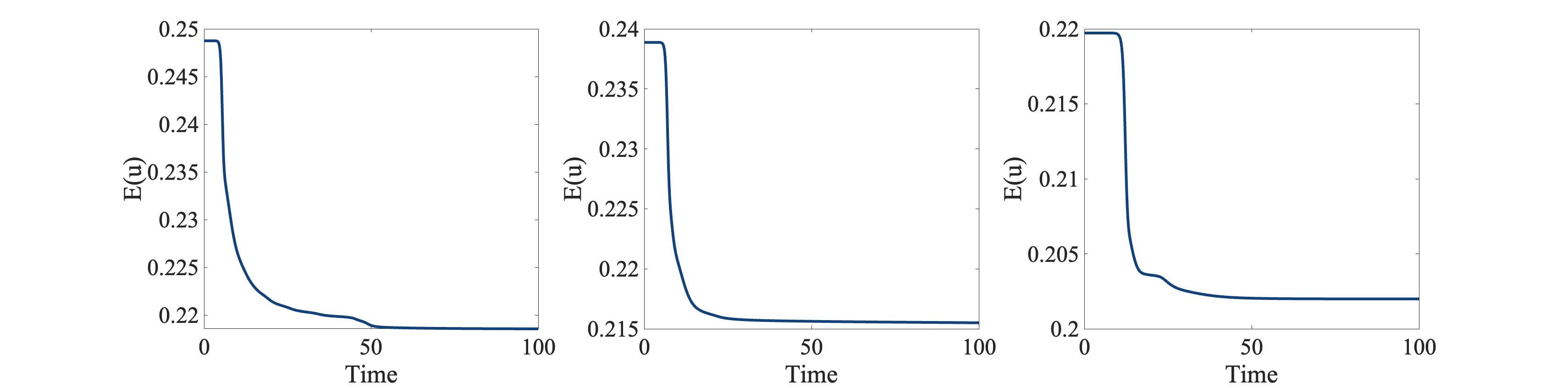}
        \caption{Energy}
        \label{fig:OK_mixed_E}
    \end{subfigure}

    \begin{subfigure}{0.99\textwidth}
        \includegraphics[width=\textwidth]{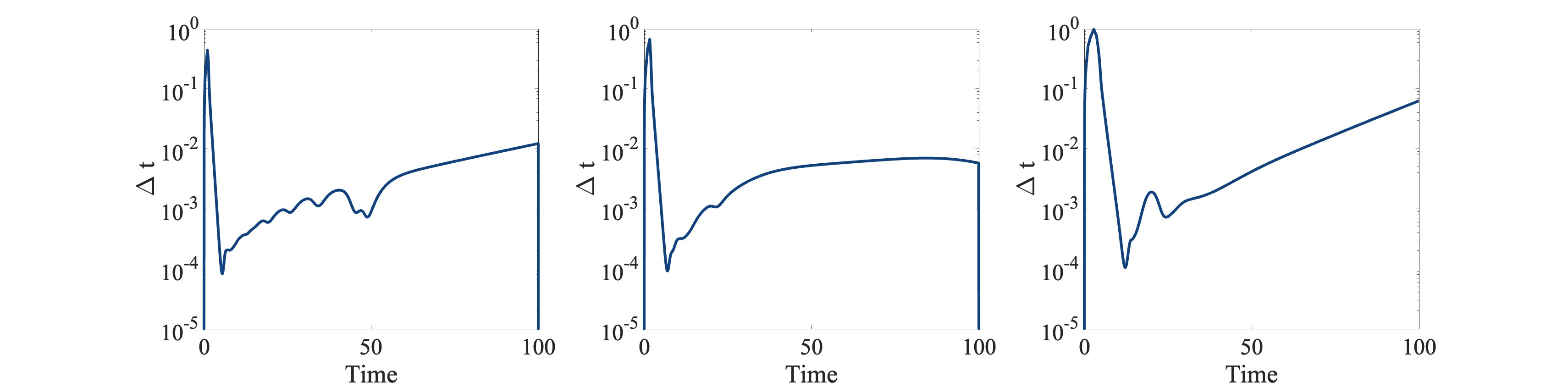}
        \caption{Time step}
        \label{fig:OK_mixed_dt}
    \end{subfigure}

     \begin{subfigure}{0.99\textwidth}
        \includegraphics[width=\textwidth]{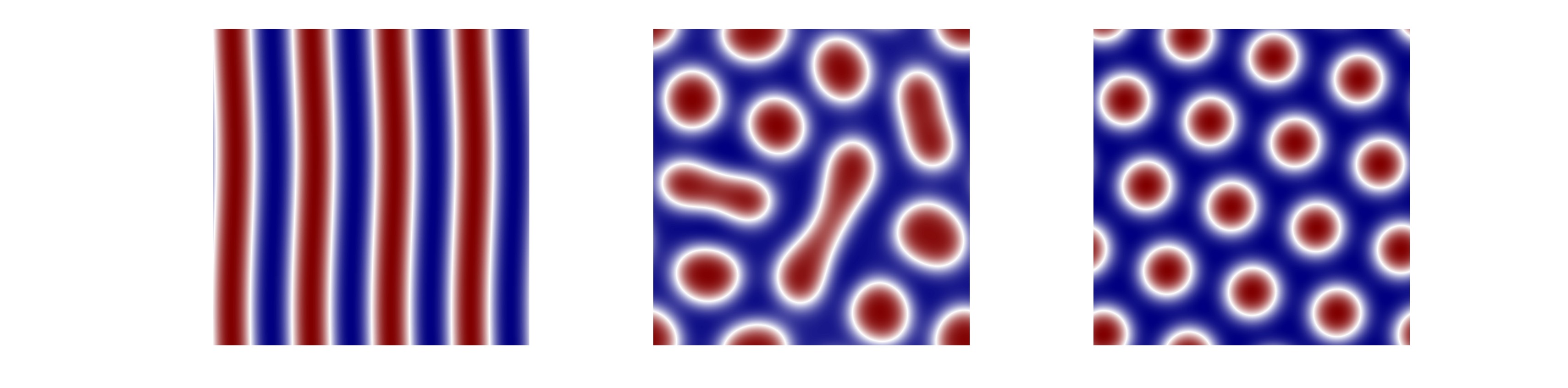}
        \caption{Profiles of $u$}
    \end{subfigure}
    \caption{Plots of energy, time step, and final profiles of $u$ for the 2D Ohta-Kawasaki model for $m=0.05$, $0.15$ and $0.25$.}
    \label{fig:OK_mixed_state}
\end{figure}

\subsection{Phase--Field Crystal in 3D}
\label{sec:PFC}

The PFC free energy for this model is given by
\begin{equation}
    E(u)= \frac{1}{|\Omega|}\int_\Omega \left(\frac{1}{2}|(\Delta+1)u|^2+\frac{1}{4}(u^2-\beta)^2\right) \, dx \nonumber
\end{equation}
where $\beta$ is a parameter. For the PFC run we use the following random initial condition
\begin{equation*}
    u_0=m+0.2(\text{rand}(N_x,N_y,N_z)-1/2)
\end{equation*}
where $\text{rand}(N_x, N_y,N_z)$ is a random number drawn from the uniform distribution in $(0,1)$.
The results are shown in Figure \ref{fig:PFC_rand_IC_adap_dt_3D_00}. In Figure \ref{fig:3D_PFC_E}, for $t\in[500,2000]$ the energy appears flat on the scale of the plot, but the time step continues to fluctuate. These fluctuations are driven by corresponding but inversely related variations in $|dE/dt|$, which ranges between $10^{-9}$ and $10^{-5}$ in this interval. When $|dE/dt|$ increases, the controller reduces the step size, and vice versa. Similar to the Ohta-Kawasaki example, due to using random initial data along with a coarser grid we have some rejected steps. This run had 250 rejected steps out of 1049 attempted steps, which yields a rejection rate of 24.2682\%.
Figure \ref{fig:3D_PFC_reject_time} depicts the number of rejected steps that occurs per time step, and this shows that there is at most 1 rejected step per time step. Figure \ref{fig:3D_PFC_reject_total} shows that the vast majority of rejected steps, 246 out of a total of 250, are concentrated at the start of the run, within the time interval $t\in[0,2.5]$, 

\begin{figure}
    \centering
    \begin{subfigure}[t]{0.49\textwidth}
    \includegraphics[width = \textwidth]{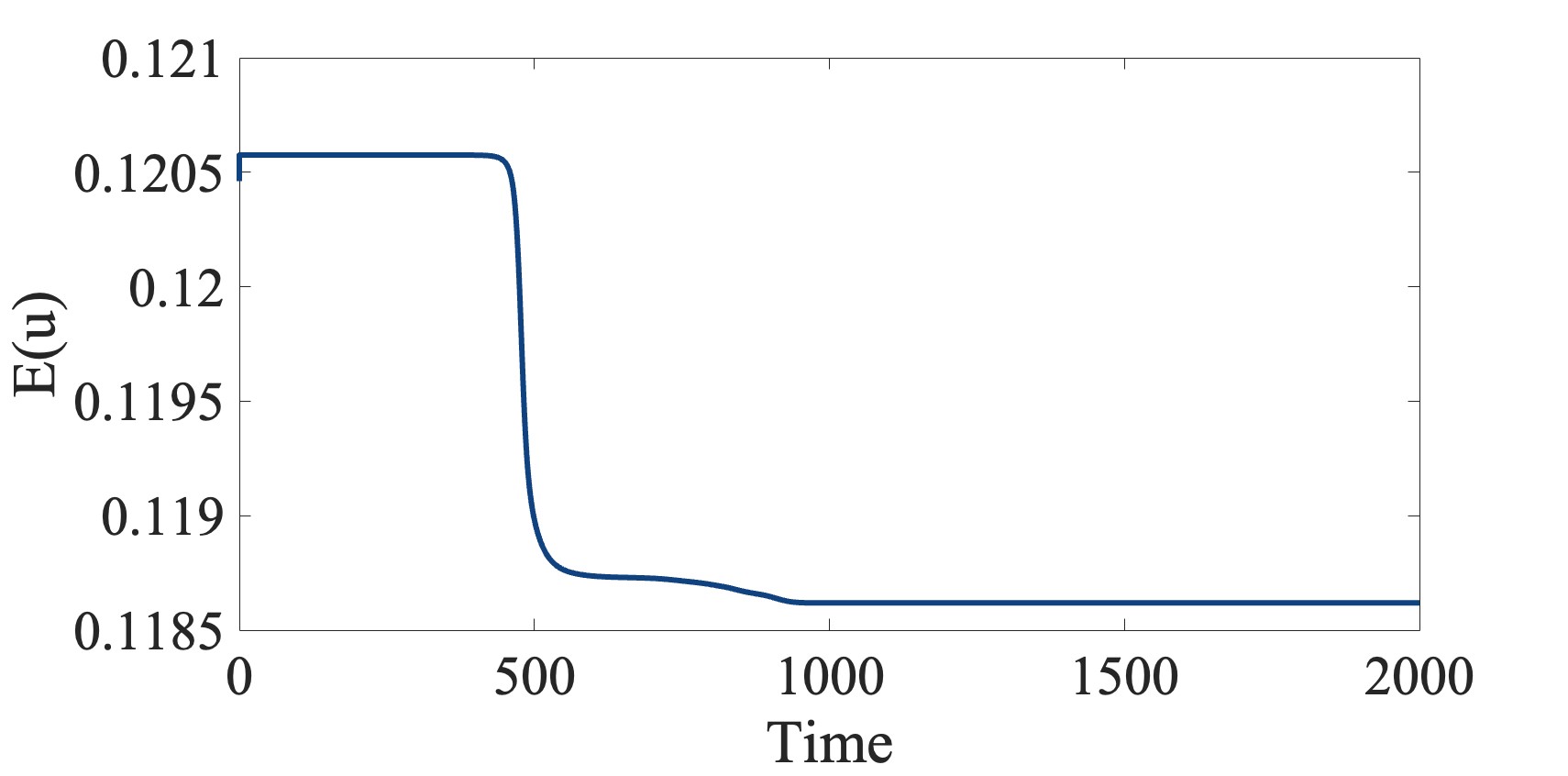}
    \caption{Energy}
    \label{fig:3D_PFC_E}
    \end{subfigure}
    \begin{subfigure}[t]{0.49\textwidth}
    \includegraphics[width = \textwidth]{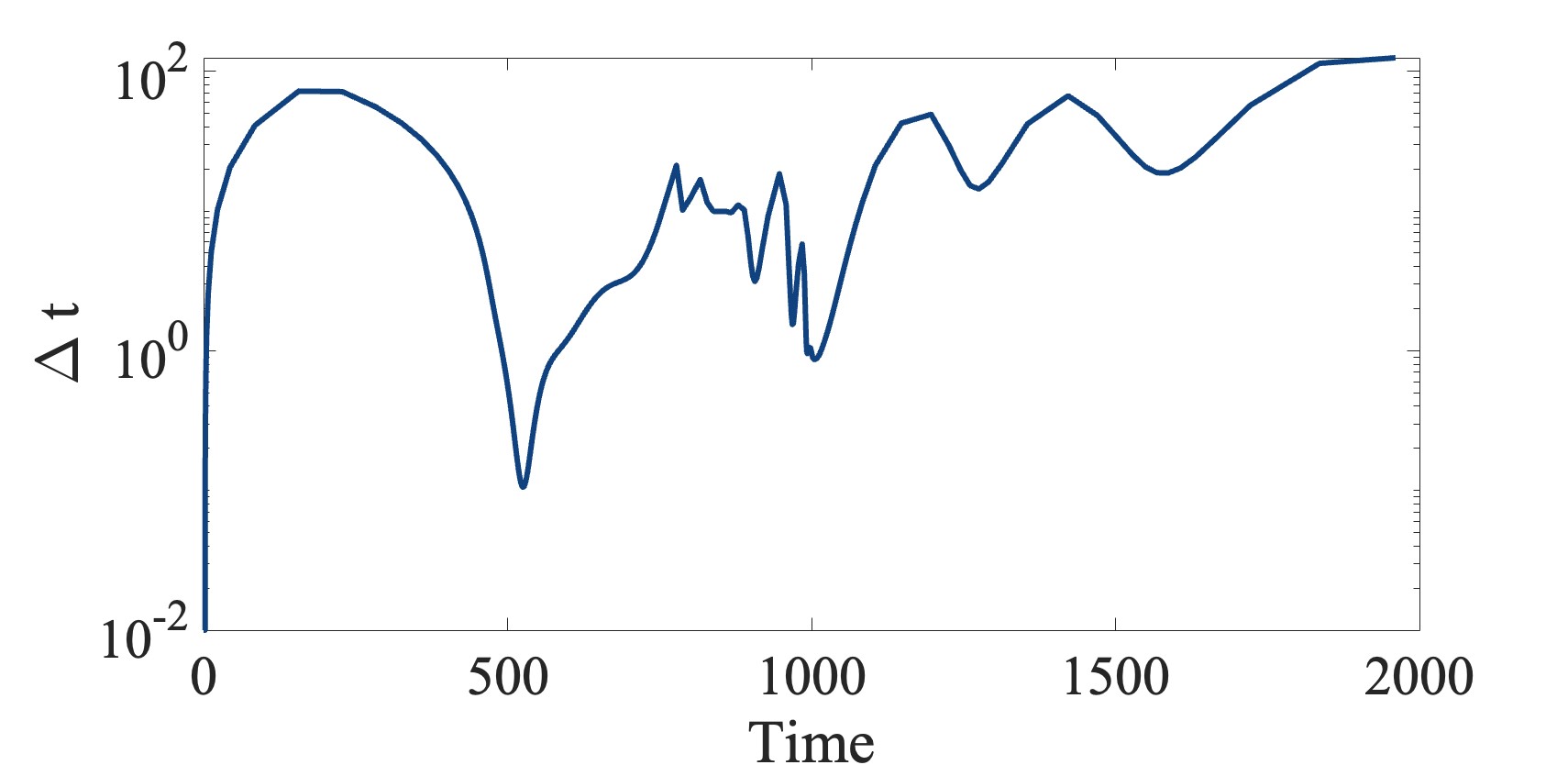}
    \caption{Time step}
    \label{fig:3D_PFC_dt}
    \end{subfigure}

    \begin{subfigure}[t]{0.5\textwidth}
    \includegraphics[width = \textwidth]{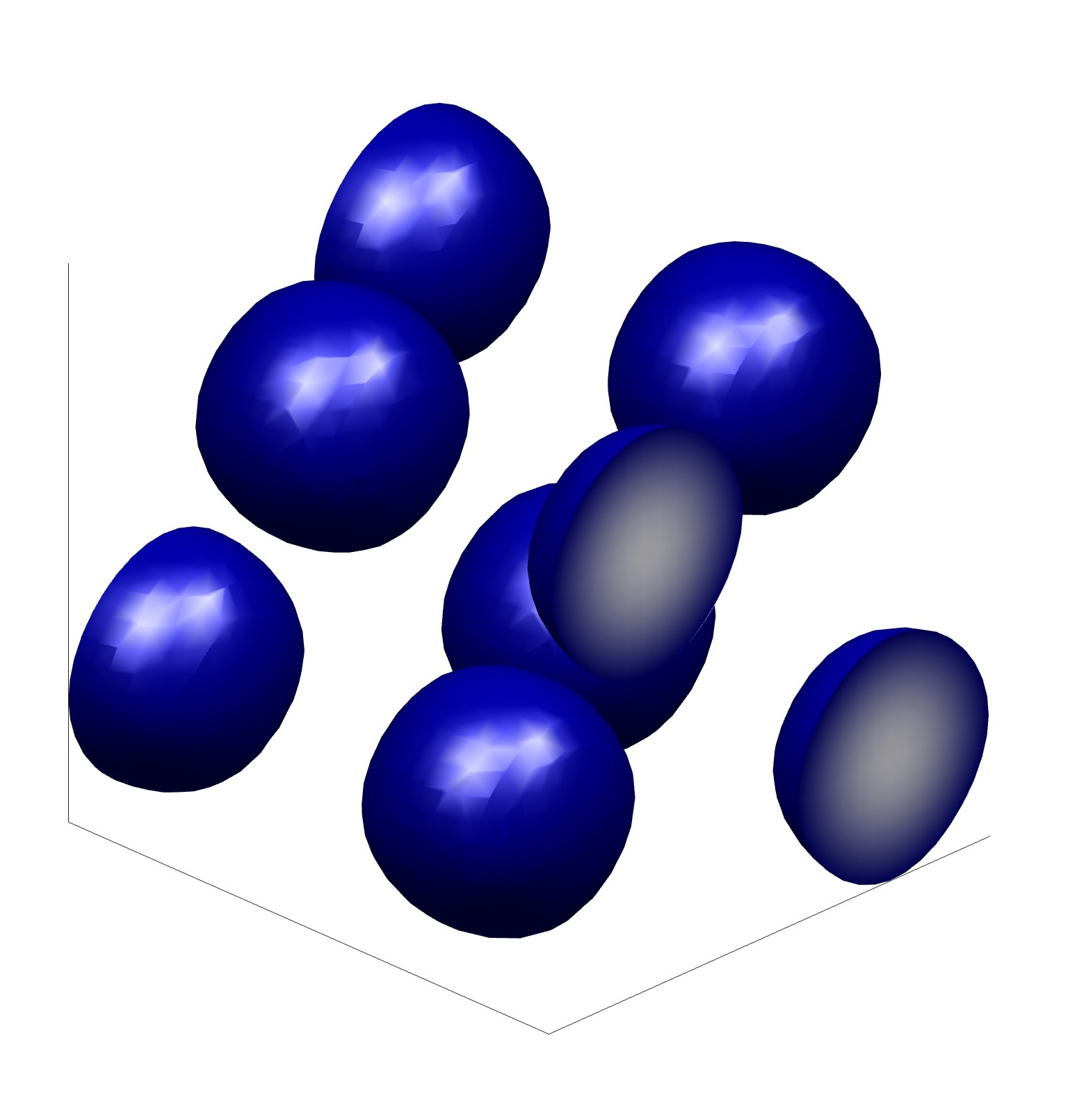}
    \caption{Profile of $u$ at $t=2000$. For this set of parameters, we see a spherical packing.}
    \label{fig:PFC_3D}
    \end{subfigure}
    \caption{Plots of energy, time step, and final profile of $u$ for the 3D PFC model with $m=0.25$.}
    \label{fig:PFC_rand_IC_adap_dt_3D_00}
\end{figure}

\begin{figure}
    \centering
    \begin{subfigure}[t]{0.49\textwidth}
    \includegraphics[width = \textwidth]{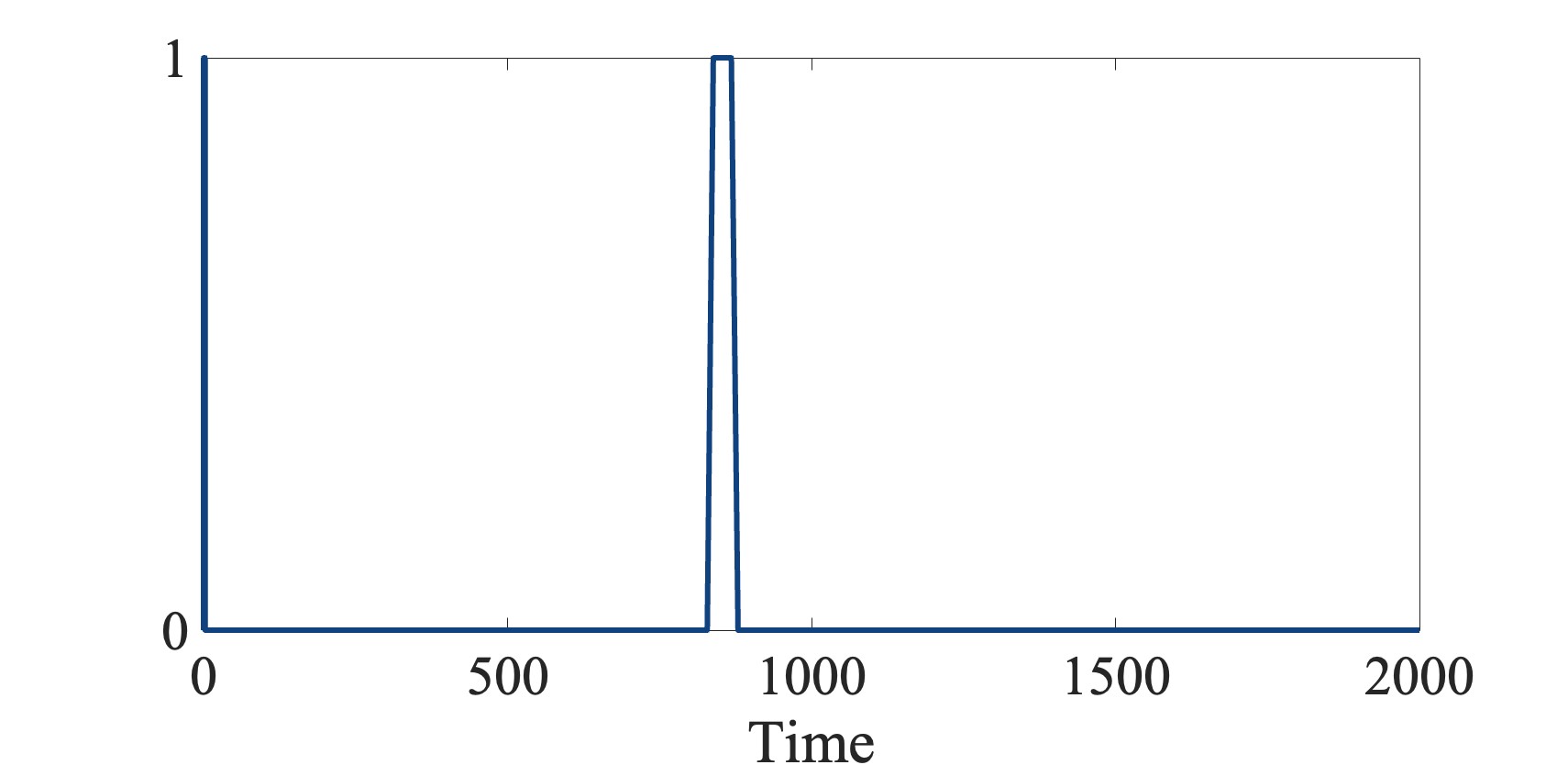}
    \caption{Number of rejected steps per time step}
    \label{fig:3D_PFC_reject_time}
    \end{subfigure}
    \begin{subfigure}[t]{0.49\textwidth}
    \includegraphics[width = \textwidth]{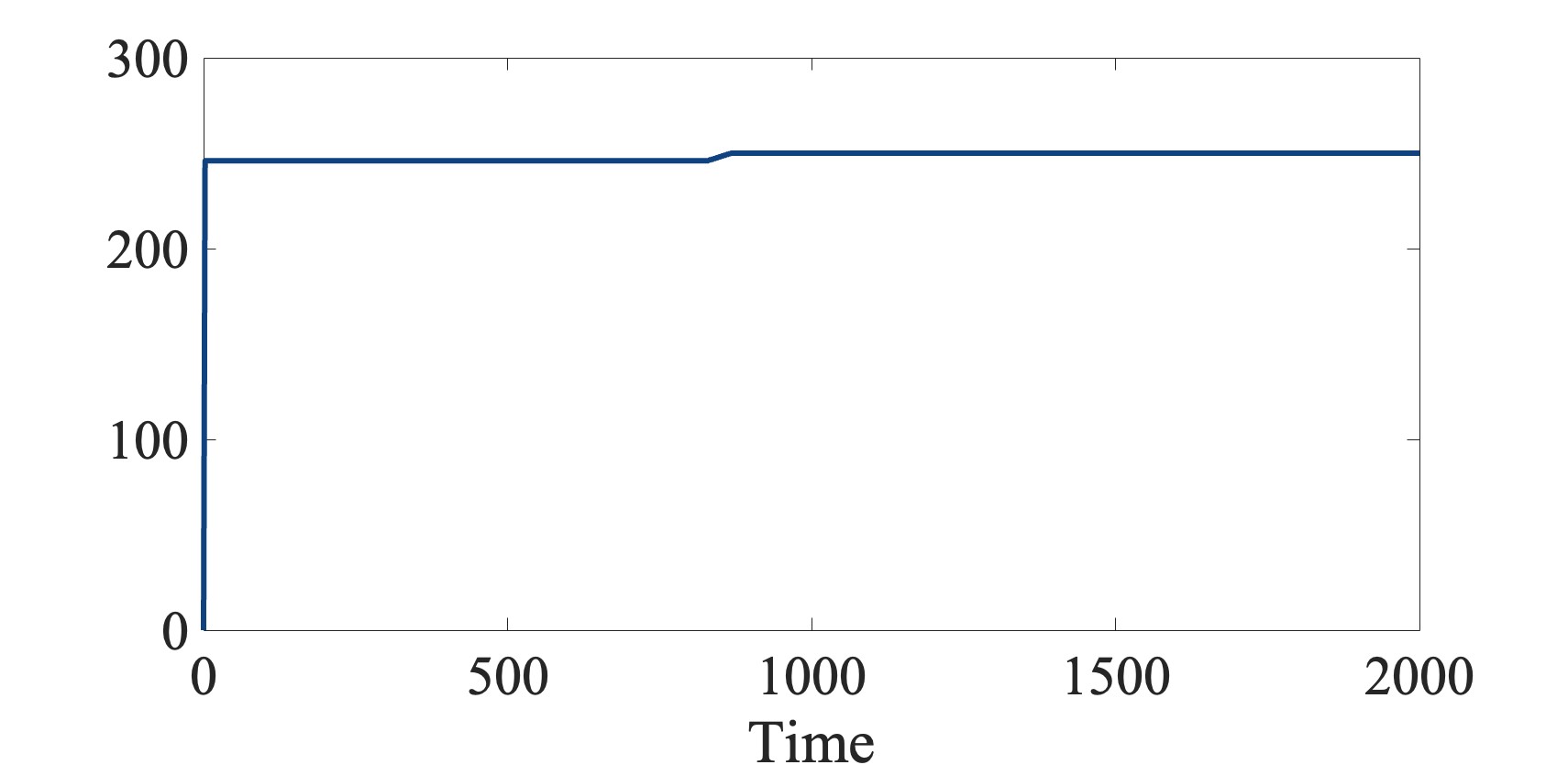}
    \caption{Running total of rejected steps over the time interval}
    \label{fig:3D_PFC_reject_total}
    \end{subfigure}
    \caption{Plots of the number of rejected steps for the 3D PFC model with $m=0.25$.}
    \label{fig:3D_PFC_reject}
\end{figure}

\subsection{Functionalized Cahn--Hilliard}

The FCH model is a variation that includes additional non-linearities and derivative couplings to model the behaviour of lipid bilayer membranes \cite{christlieb2024FCH} which contain both hydrophobic and hydrophilic parts. Consider the following FCH free energy
\begin{equation}
    E(u) = \frac{1}{|\Omega|}\int_{\Omega}\Biggl(\frac{1}{2}(\varepsilon^2\Delta u)^2-\left(1+\frac{\eta}{2}\right)\varepsilon^2|\nabla u|^2\Biggr) \, dx +W(u)\nonumber
\end{equation}
where
\begin{equation}
    W(u)=\frac{1}{|\Omega|}\int_{\Omega}\left(3\varepsilon^2u^2|\nabla u|^2 +\frac{1}{2}(F'(u))^2-\eta F(u)\right) \, dx\nonumber
\end{equation}
and the variational derivative of the nonlinear functional is
\begin{equation}
\frac{\delta G}{\delta u}
=
-6\varepsilon^2\left(u|\nabla u|^2+u^2\Delta u\right)
+F'(u)F''(u)-\eta F'(u).
\nonumber
\end{equation}
Taking the variational derivative gives the following PDE
\begin{equation}
    u_t = \Delta\left(\varepsilon^4\Delta^2 u+\left(2+\eta\right)\varepsilon^2\Delta u +\frac{\delta G}{\delta u}\right)\nonumber
\end{equation}
For the FCH model, we will look at the meandering instability example from \cite{ZhangC2020FCH}. The initial condition is given by a smooth approximation of
\begin{equation}
    \phi_0 = \begin{cases}
        -1,&x>\sin(y)+0.34,\\
        -1,&x<\sin(y)-0.34,\\
        1,&\text{otherwise}
    \end{cases}
\end{equation}
where the process to smooth the initial data is outlined in \cite{ZhangC2020FCH}. Our results are shown in Figure \ref{fig:FCH_wave_IC} and were compared to \cite{ZhangC2020FCH} to verify our results. The profiles of $u$ in Figure \ref{fig:FCH_u} resemble the results in \cite{ZhangC2020FCH}. In Figures \ref{fig:FCH_E} and \ref{fig:FCH_dt}, we see a decrease in the energy for $t\in[0,20]$, so there is a decrease in the time step. For $t>20$ the energy flattens out and the time step starts to increase.

\begin{figure}
\centering
    \begin{subfigure}{0.49\textwidth}
        \includegraphics[width=\textwidth]{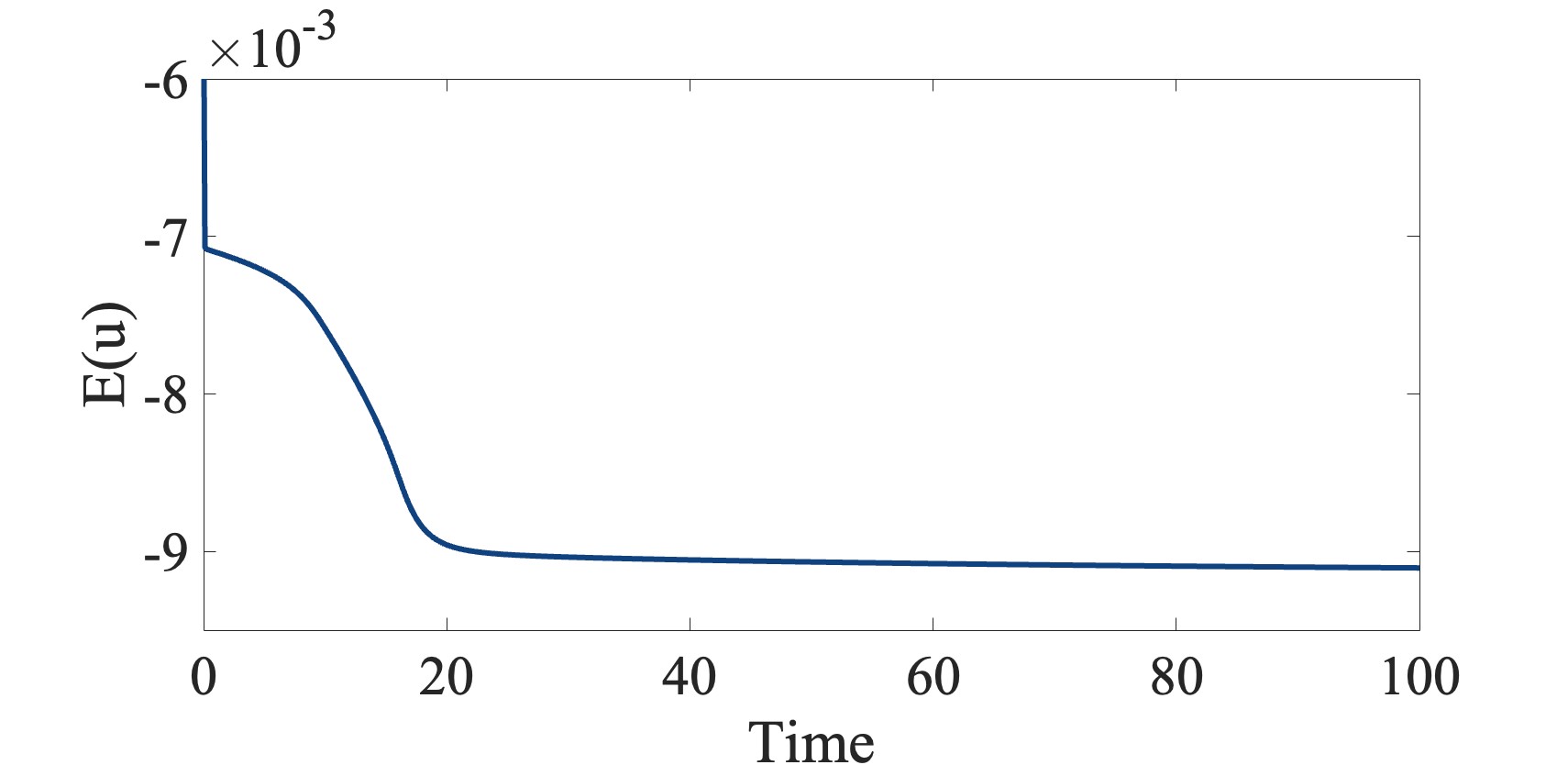}
        \caption{Energy}
        \label{fig:FCH_E}
    \end{subfigure}
    \begin{subfigure}{0.49\textwidth}
        \includegraphics[width=\textwidth]{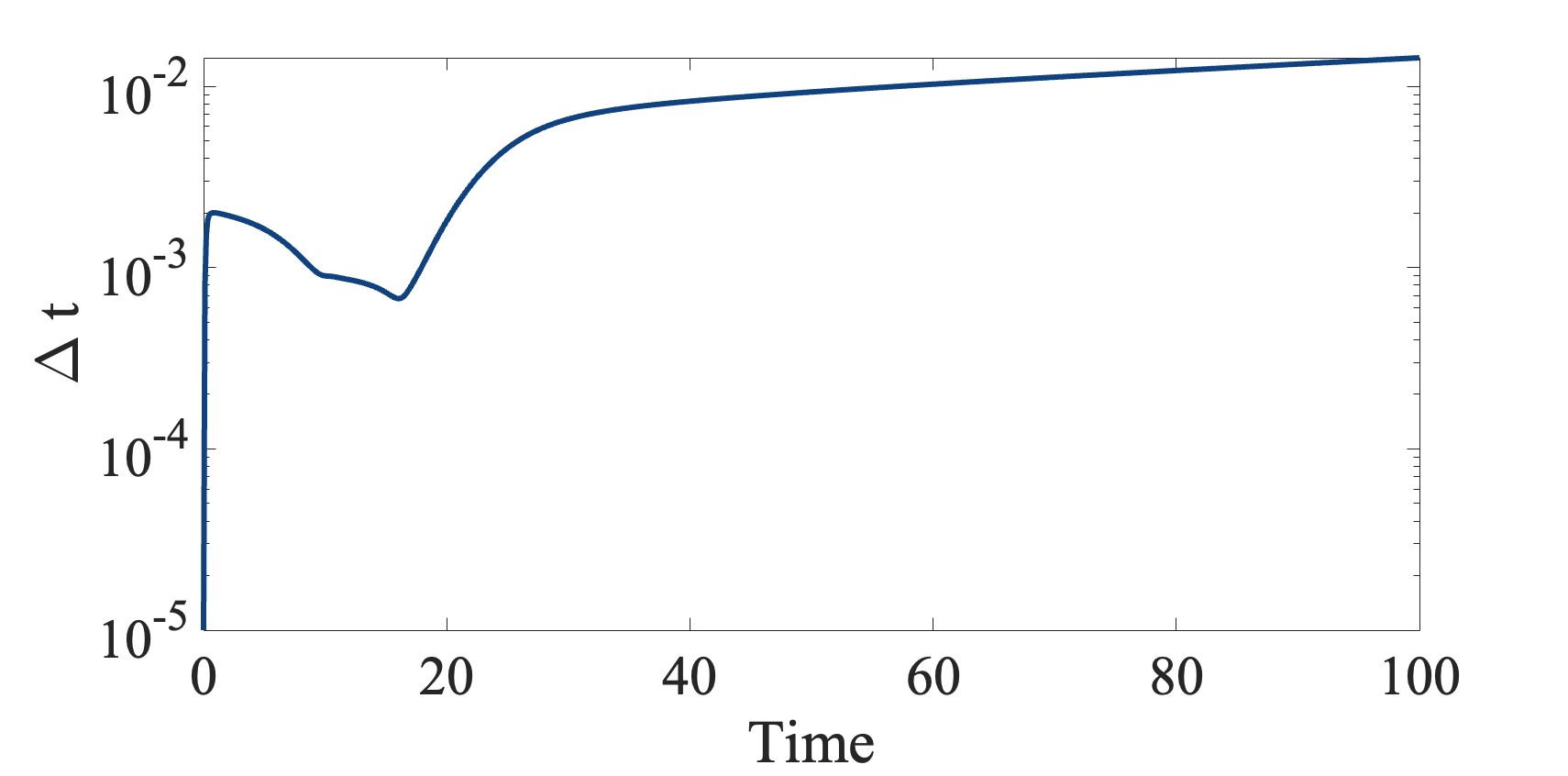}
        \caption{Time step}
        \label{fig:FCH_dt}
    \end{subfigure}

    \begin{subfigure}{0.99\textwidth}
        \includegraphics[width=\textwidth]{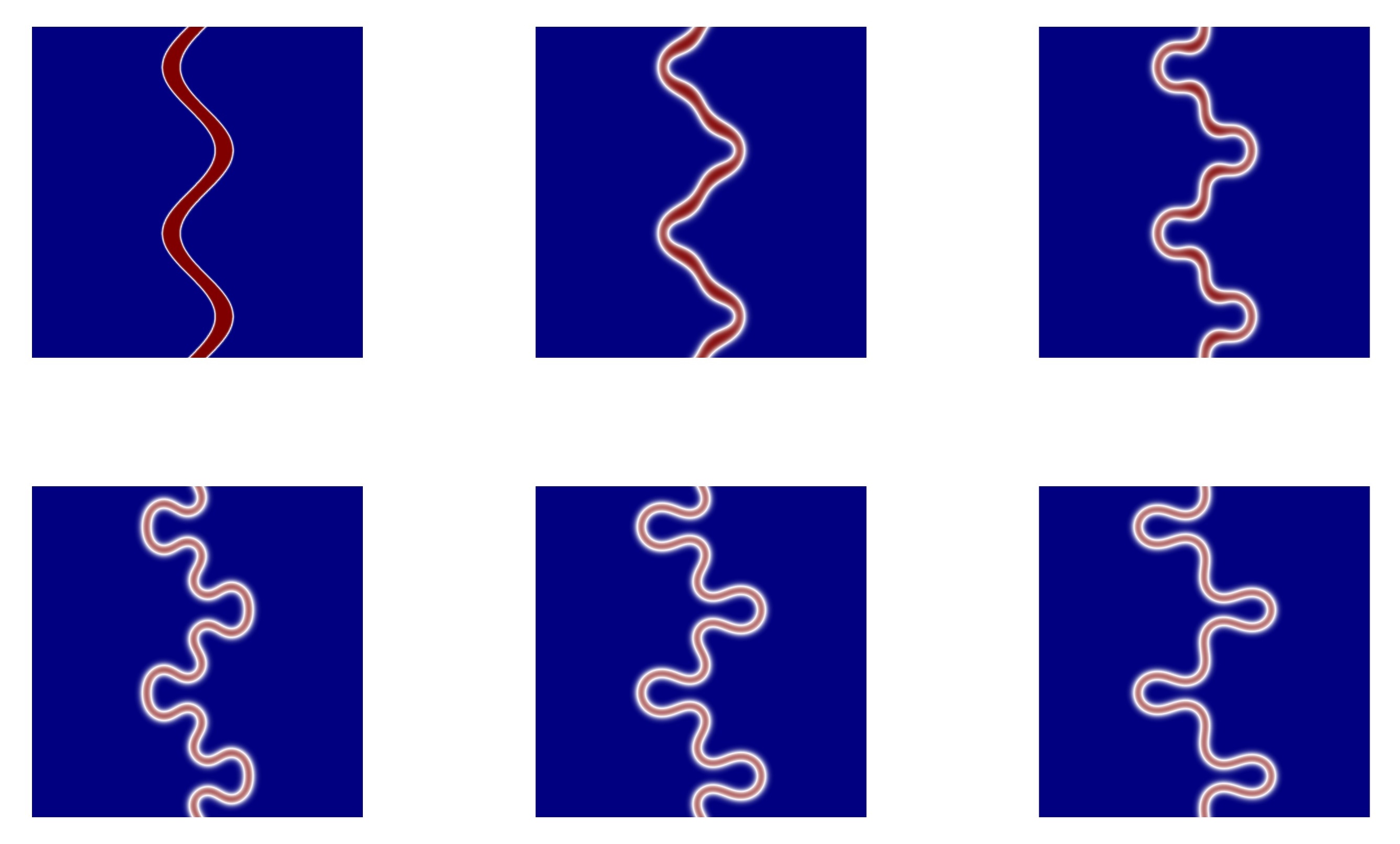}
        \caption{Profiles of $u$ at times $t=0$, 10, 15, 20, 50, and 100.}
        \label{fig:FCH_u}
    \end{subfigure}

    \caption{Plots of energy, time step, and profiles of $u$ at various times for the 2D FCH with smooth initial condition from \cite{ZhangC2020FCH}.}
    \label{fig:FCH_wave_IC}
\end{figure}

\subsection{Efficiency comparison}
\label{sec:efficiency}
We compare our adaptive time stepping scheme with using fixed time step to show how efficient our scheme is. Table \ref{table:num_FFT_full} shows the number of FFT evaluations and the $L_2$ norm of $|\Esym - E(u)|$ for each of the models we've shown. We include runs with various fixed time steps and in Table \ref{table:num_FFT_summary} choose the run with an error closest to the adaptive run. For Cahn--Hilliard and Ohta--Kawasaki we have approximately 10 times the speedup if we use an adaptive method over a fixed time step, for PFC it's approximately 18.9 times and for FCH about 3.5 times.

\begin{table}
  \begin{center}
  \begin{tabular}{|c| c| c |c|} 
  \hline
  \multicolumn{4}{|c|}{CH Spot Collapse}\\
  \hline
   \multicolumn{2}{|c|}{Time step or tolerance} & FFT & $L_2$ Norm of Error  \\
  \hline
  $\tau$ & $10^{-5}$& 84124 & $2.0825\mathrm{e}{-05}$ \\
  \hline
  \multirow{3}{*}{$\dt$} & $10^{-2}$ & 9001& $4.1706\mathrm{e}{-03}$\\
  & $10^{-3}$ & 90001 & $4.9919\mathrm{e}{-05}$\\
  & $10^{-4}$ & 900001 & $1.6478\mathrm{e}{-05}$\\
  \hline
  \end{tabular}
  \end{center}

    \begin{center}
  \begin{tabular}{|c| c| c |c |} 
  \hline
  \multicolumn{4}{|c|}{Ohta-Kawasaki}\\
  \hline
   \multicolumn{2}{|c|}{Time step or tolerance} & FFT & $L_2$ Norm of Error   \\
  \hline
  $\tau$ & $10^{-5}$& 278722 & $6.9410\mathrm{e}{-05}$  \\
  \hline
  \multirow{3}{*}{$\dt$}& $10^{-2}$ & 30001 & $6.0315\mathrm{e}{-04}$  \\
  & $10^{-3}$ & 300001& $1.9743\mathrm{e}{-04}$\\
  & $10^{-4}$ &3000001 & $6.2650\mathrm{e}{-05}$\\
  \hline
  \end{tabular}
  \end{center}

    \begin{center}
  \begin{tabular}{|c| c| c |c |} 
  \hline
  \multicolumn{4}{|c|}{Phase--field crystal}\\
  \hline
   \multicolumn{2}{|c|}{Time step or tolerance} & FFT & $L_2$ Norm of Error  \\
   \hline
  $\tau$ & $10^{-5}$& 3148 & $9.1715\mathrm{e}{-05}$ \\
  \hline
  \multirow{3}{*}{$\dt$}& $10$ & 601 & $2.5389\mathrm{e}{-04}$\\
  & $1$ & 6001& $1.7211\mathrm{e}{-04}$\\
  & $0.1$ &60001 & $8.8917\mathrm{e}{-05}$\\
  \hline
  \end{tabular}
  \end{center}

      \begin{center}
  \begin{tabular}{|c| c| c |c |} 
  \hline
  \multicolumn{4}{|c|}{Functionalized Cahn--Hilliard}\\
  \hline
   \multicolumn{2}{|c|}{Time step or tolerance} & FFT & $L_2$ Norm of Error  \\
  \hline
  $\tau$ & $10^{-3}$& 144141 & $7.4185\mathrm{e}{-06}$ \\
  \hline
   \multirow{3}{*}{$\dt$}& $10^{-3}$ & 500001 & $8.0170\mathrm{e}{-06}$ \\
  & $5\times10^{-4}$ & 1000001
 & $5.6238\mathrm{e}{-06}$\\
 \hline
  \end{tabular}
  \end{center}
  \caption{Number of FFT evaluations and the norm of the error for adaptive and constant time stepping for each example.}
  \label{table:num_FFT_full}
  \end{table}

\begin{table}
  \begin{center}
  \begin{tabular}{lccccccc}
  Model & Fixed $\Delta t$ & Fixed FFTs & Fixed Norm & Adaptive FFTs & Adaptive Norm &Approximate Speedup \\
  \hline
  CH spot & $10^{-3}$ & 900001 &$1.6478\mathrm{e}{-05}$ &84124 &$2.0825\mathrm{e}{-05}$ & 10.7\\
  OK ($m=0.05$)& $10^{-4}$& 3000001& $6.2650\mathrm{e}{-05}$& 278722& $6.9410\mathrm{e}{-05}$ & 10.8\\
  PFC 3D & $10^{-1}$ & 60001& $8.8917\mathrm{e}{-05}$& 3148 &$9.1715\mathrm{e}{-05}$& 18.9\\
  FCH & $10^{-3}$& 500001 & $8.0170\mathrm{e}{-06}$ & 115312 & $7.4185\mathrm{e}{-06}$& 3.5
  \end{tabular}
  \end{center}
  \caption{Condensed version of Table \ref{table:num_FFT_full} presenting the adaptive time-step run alongside the fixed time-step run whose error norm most closely matches the adaptive case. This table also reports the approximate speedup.}
  \label{table:num_FFT_summary}
  \end{table}

\section{Conclusions}
\label{sec:conclusions}

We have shown that energy-rate--based adaptive time stepping for gradient flows has a natural interpretation as equidistribution of a monitor function along the energy curve. The monitor $g = |\dot{E}|$ is asymptotically optimal in the minimax sense: it minimises the leading-order maximum symmetric-energy discrepancy for a fixed step budget, and the optimal leading-order value is $(E(0)-E(T))/(2N)$, depending only on the total energy drop and the number of steps. A combined monitor that switches to $g = \sqrt{|\dot{E}|}$ near equilibrium caps step growth where the first rule becomes vacuous, and does so without weakening the energy-error bound: the combined rule controls $\dt_n|\dot E_n|$ by $\tau_1$ in both regimes. The arclength parameter $\beta$ used in prior adaptive stepping schemes \cite{Qiao2011,Zhang2012} is shown as $\beta = \Delta t_{\max}/\tau_1$, the ratio of the largest permitted step to the energy-error tolerance. These results connect the ideas energy-stable phase-field numerics and mesh equidistribution which developed largely independently.

The adaptive controller built on this framework proposes steps from the energy dissipation rate and accepts them only when a computable symmetric energy $\Esym^n$ does not increase. Because the accepted $\Esym$ values form a monotone chain, this yields a bound on the physical energy that is uniform in the number of steps, $\sup_n E(u^n) \le E(u^0) + \tau_1 + O(\Delta t_{\max}^2)$: the user's tolerance caps the admissible energy rise over the entire integration, not merely per step. Per step, the deviation from monotonicity is proportional to $(1-\rho_n^{-1})_+$, so it is a step-\emph{ratio} rather than a step-\emph{size} effect and vanishes when the controller keeps ratios near unity. The SSAV--vBDF2 scheme achieves second-order global accuracy under standard step-ratio restrictions.  The guarantee is a posteriori, depending on a rejection mechanism. Across Cahn--Hilliard, Ohta--Kawasaki, PFC, and FCH test problems, the controller takes small steps during rapid energy changes and grows the step aggressively on plateaus, with no accepted step violating energy monotonicity.

Several extensions are worth pursuing. Higher-order BDF methods (BDF3--BDF6) can reach up to sixth-order accuracy, and the equidistribution analysis would need to account for the higher-order local truncation error structure. Whether the energy-error bounds remain useful as $T \to \infty$ depends on the asymptotic dynamics of the specific gradient flow; for flows approaching stable equilibria one expects the situation to improve as $|\nabla E| \to 0$, but a rigorous statement would require long-time stability estimates. Vector-valued problems such as Cahn--Hilliard--Navier--Stokes can in principle be handled the same way, though the energy structure is more involved and the interaction between different dissipation mechanisms (viscous and diffusive) may require separate monitors or a multi-component equidistribution strategy.

\section*{Data availability}
No data was used for the research in this article.

\section*{Acknowledgements}
The authors thank NSERC (2021-04114) for financial support and John Stockie for helpful discussions. 

\subsection*{Declaration of generative AI}
During the preparation of this manuscript the authors used Writefull for Overleaf in order to paraphrase some sentences and make edits to the document. The authors also used Claude code to comment and debug some of the code. The authors reviewed and edited the output and take full responsibility for the content of the publication.

\bibliographystyle{plain}
\bibliography{references}
\end{document}